\documentclass[12pt,a4paper]{article}
 \textwidth 14cm \textheight 21.6cm
 \usepackage[colorlinks]{hyperref}
\usepackage{amsmath}
\usepackage{amsthm}
\usepackage{amsfonts}
\usepackage{latexsym}
\newcounter{alphthm}
\setcounter{alphthm}{0}

\newtheorem{theo}[alphthm]{Theorem}

\newtheorem{lemA}[alphthm]{Lemma}

\newtheorem{coro}[alphthm]{Corollary}
\newtheorem{thm}{Theorem}
\newtheorem{defn}{Definition}
\newtheorem{prop}{Proposition}
\newtheorem{cor}{Corollary}
\newtheorem{lem}{Lemma}

\newcommand{\be}{\begin{equation}}
\newcommand{\ee}{\end{equation}}
\newcommand{\bes}{\begin{equation*}}
\newcommand{\ees}{\end{equation*}}
\newcommand{\br}{\begin{remark}}
\newcommand{\er}{\end{remark}}
\newcommand{\ben}{\begin{enumerate}}
\newcommand{\een}{\end{enumerate}}
\newcommand{\pa}{{\partial}}

\newcommand{\pxi}{{\pa \over \pa x^i}}

\def\beq{\begin{equation}}
\def\eeq{\end{equation}}
\def\nn{\nonumber}
\newcommand{\R}{I\!\! R}

\title{On projective symmetries on Finsler Spaces}
\author{B. Lajmiri,\ \ B. Bidabad\thanks{The corresponding author, bidabad@aut.ac.ir;  behroz.bidabad@math.univ-toulouse.fr },\ \ M. Rafie-Rad,\ \ Y. Aryanejad-Keshavarzi }
\date{}
\begin{document}
 \maketitle
\begin{abstract}
  There are two definitions of Einstein-Finsler spaces introduced by Akbar-Zadeh, which we will show is equal along the integral curves of $I$-invariant projective vector fields.
  The sub-algebra of the $C$-projective vector fields, leaving the $H$-curvature invariant, has been studied extensively. Here we show on a closed Finsler space with negative definite Ricci curvature reduces to that of Killing vector fields.
 Moreover, if an Einstein-Finsler space admits such a projective vector field then the flag curvature is  constant.
Finally, a classification of compact isotropic mean Landsberg manifolds admitting certain projective vector fields is obtained with respect to the sign of Ricci curvature.
 \end{abstract}
 {{\bf{ Keywords}}:
Projective vector, symmetries,  the physical meaning of projective Lie algebra, Finsler, Landsberg, Isotropic mean Landsberg.\\
 \textbf{AMS subject classification} 53B40, 53C60.}
\section{Introduction}\setcounter{equation}{0}
The  class of isotropic mean Landsberg manifolds becomes recently  an omnipresent class of  metrics on both the theory and the applications of Finsler geometry. One of its essential roles in giving evidence to some ambiguous features of this geometry.
Without claiming to be exhaustive, we cite some of the more significant ones, see \cite{B2,BS2,BS3,M1,SB,S,TSP,Ti},  etc.

 The collection of all projective vector fields on a Finsler space $(M,F)$ is a finite-dimensional Lie algebra with respect to the usual Lie bracket,
called the projective algebra, and is denoted by $p(M,F)$. It is the Lie algebra of the projective group $P(M,F)$.
 Recently, the sub-algebra of the projective vector fields, leaving the Cartan curvature and the {\bf H}-curvature  invariant, has been studied. This sub-algebra contains vector fields leaving the mean Cartan tensor {\bf I} invariant.  We show that this sub-algebra on a closed Finsler space with negative definite Ricci curvature reduces to the sub-algebra of Killing vector fields.

 There are several Finsler metrics in this class, such as the
\textit{Funk metric} on strongly convex domains in ${\R^n}$. If $g_{ij}$ is the
fundamental tensor of a Finsler structure $F$, the vertical derivative of
$g_{ij}$ gives rise to the {\it Cartan tensor} {\bf
C}, and the horizontal derivative of $C$ along geodesics is called
the {\it {Landsberg tensor}} {\bf L}.

\subsection*{Why {\bf I}-invariant projective vector fields?}
As a {\bf physical motivation} for the study of projective symmetries recall that
the theory of relativity consists of two interdependent theories of Einstein: special relativity and general relativity. Special relativity applies to all physical phenomena in the absence of gravity. While general relativity explains the law of gravitation and its relation with other forces of nature. It includes astronomy, cosmology, and astrophysics fields.
Einstein's theory is currently considered to be the most adequate model of space-time and gravitation, of which Newtonian theory and the special theory of relativity are only valid approximations in particular circumstances. Its main goal is to elucidate, from a different point of view and in a systematic way, which mathematical facts and ideologies of facts are sufficient to support the edifice of the general relativity theory.
To this end, physicists can isolate certain relatively independent substructures that are contained in Einstein's theory, the topological, conformal, projective, affine, and  metric differential structures, which are related to different observable phenomena or  different relationships inferred from these phenomena.

In particular, the motion of particles in free fall defines a projective structure over space-time. This provides a projective connection, or an equivalence class of symmetrical affine connections all having the same non-parametric geodesic curve.

 This can be seen as a mathematical formulation of the weak principle of equivalence valid in both Newtonian and relativistic space-time and gravity theory, see \cite{Is}.
One of the current trends for many researchers is to seek Finslerian approaches to take advantage of the benefits of quantum technology and classical field theory.
 For example, so far, violations of relativity such as very special relativity 
 and very special general relativity 
  have been formulated.
  On the other hand, local anisotropy can be modeled by a nonlinear connection structure that generalizes that of affine connection.
    The metric-affine gravity contains various types of generalized Finsler-Lagrange-Hamilton-Cartan geometries distinguished by a metric and linear
connection adapted to the non-linear connection structure.

The mean Cartan tensor ${\bf I}$ can measure locally the geometric anisotropy of the underlying Finslerian space-time, as well as other non-Riemannian quantities ${\bf S}$, ${\bf E}$, ${\bf H}$, etc, (see Definitions\ref{Def;S} and \ref{Def;I,E,H}) are also referred to the local anisotropy.
However, they can only be physically significant if they are
invariant among the Finslerian cosmological metrics of space-time.
It's somehow similar to what Nurowski did in his work to make darkness an important issue in cosmology.
 Therefore, to have a consistent metric of local anisotropy, we should assume that it is a projective invariant quantity.
  On the other hand, based on the general covariance axiom in Physics, the quantities and tensors must have a uniform shape and be invariant under the observers' transformations. Mathematically, it means that we are only allowed to consider transformations that preserve all metric derived quantities such as the Cartan tensor ${\bf C}$ and the mean Cartan tensor ${\bf I}$.

  The study of projective symmetry is to consider projective vector fields that leave ${\bf I}$ invariant.

   In Proposition \ref{Prop;I-inv}, we elucidate the essential nature of the {\bf I}-symmetries.
 More intuitively, we show  ${\bf E}$ and ${\bf H}$ are projectively invariant, if {\bf I} is
projectively invariant.

Meanwhile, the following results are obtained in the process of this work.
\begin{thm}\label{Th;first}
Let $(M,F)$ be an Einstein Finsler space, satisfying ${ Ric_{ij}} = (n-1)kg_{ij},$ for some constant $k$. If $(M,F)$ admits a non-affine I-invariant projective vector field, then  it has the constant flag curvature ${k}$ at every point.
\end{thm}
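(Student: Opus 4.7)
First, decode the hypotheses. Being projective is the infinitesimal identity $\mathcal{L}_V G^i = P\,y^i$ for some $P=P(x,y)$ positively homogeneous of degree one in $y$, and non-affinity means $P\not\equiv 0$. By Proposition~\ref{Prop;I-inv}, the $I$-invariance of $V$ automatically upgrades to $\mathcal{L}_V\mathbf{E}=0$ and $\mathcal{L}_V\mathbf{H}=0$, so $V$ is simultaneously a symmetry of all three non-Riemannian objects $\mathbf{I}$, $\mathbf{E}$, $\mathbf{H}$.

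The core of the argument is to Lie-differentiate the Einstein equation $Ric_{ij}=(n-1)k\,g_{ij}$ along $V$. Since $k$ is constant, the right-hand side collapses to $(n-1)k\,\mathcal{L}_V g_{ij}$. On the left, substituting $\mathcal{L}_V G^i=P\,y^i$ into the standard formula for the infinitesimal projective change of the Ricci tensor expresses $\mathcal{L}_V Ric_{ij}$ as a combination of horizontal and vertical derivatives of $P$ together with correction pieces built from $\mathbf{E}$ and $\mathbf{H}$. The key simplification is that the first step eliminates the latter, so what remains is a clean identity linking $P$, its covariant derivatives, $k$ and the fundamental tensor.

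Contracting this identity with $y^iy^j$, and separately with $g^{ij}$, and exploiting the homogeneity of $P$, reduces the matter to an algebraic relation of the schematic form $P\cdot(K-k)=0$ at every flag, where $K$ denotes the flag curvature. Since $V$ is non-affine, $P$ does not vanish identically; by continuity $K\equiv k$ on an open dense set, hence everywhere. Equivalently, the projective Weyl curvature vanishes, which together with the Einstein condition characterizes constant flag curvature $k$.

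The \textbf{main obstacle} is the middle step: the projective change formula for $Ric_{ij}$ in the Finslerian setting involves up to second-order spray and vertical derivatives of $P$ mixed with non-Riemannian corrections, and the whole plan succeeds only if Proposition~\ref{Prop;I-inv} is strong enough to annihilate every such non-Riemannian tail, leaving a Weyl-type identity that can be solved pointwise using only the non-vanishing of $P$.
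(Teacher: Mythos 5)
Your opening moves match the paper's: Lie-differentiate the Einstein condition along the complete lift, note that constancy of $k$ turns the right-hand side into $(n-1)k\,\pounds_{\hat X}g_{ij}$, and use Proposition~\ref{Prop;I-inv} (through Lemma~\ref{beh15}) to identify $\pounds_{\hat X}Ric_{ij}$ with $\pounds_{\hat X}\tilde R_{ij}=(1-n)D_i\Psi_j$, the $\mathbf H$-tail having been annihilated. Up to that point you are on the paper's track.

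The gap is precisely the step you yourself flag as the ``main obstacle''. You assert that contracting the resulting identity with $y^iy^j$ and with $g^{ij}$ yields an algebraic relation of the form $P\cdot(K-k)=0$ at every flag, but no such relation can come out of those contractions: the identity $(1-n)D_i\Psi_j=k(n-1)\pounds_{\hat X}g_{ij}$ contains no curvature term, so there is nothing for the flag curvature to appear from. The paper's actual mechanism is a second-order one: apply $D_0$ and use Lemma~\ref{prop2} to replace $\nabla_0(\pounds_{\hat X}g_{ij})$ by $2\Psi g_{ij}+\Psi_iy_j+\Psi_jy_i$; take a further vertical derivative and exploit the total symmetry of $\dot\partial_m D_i\Psi_j=-\Psi_rG^r_{ijm}$ in $i,j,m$ to antisymmetrize down to $D_iD_m\Psi_j-D_mD_i\Psi_j=k(\Psi_mg_{ij}-\Psi_ig_{jm})$; only then does curvature enter, via the commutation formula $D_mD_k\Psi-D_kD_m\Psi=K^r_{0mk}\dot\partial_r\Psi$ of Lemma~\ref{beh16}, which after contraction with $\Psi^m$ gives $g_y(\mathbf R_y(\nabla\Psi),\nabla\Psi)=k\{F^2(y)\,g_y(\nabla\Psi,\nabla\Psi)-g_y(\nabla\Psi,y)^2\}$. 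This identifies $k$ as the flag curvature of the particular flag $\mathrm{span}\{y,\nabla\Psi\}$, and non-affinity is used pointwise --- if $\nabla\Psi=0$ then Euler's theorem forces $\Psi=y^i\Psi_i=0$ --- to guarantee that this flag is nondegenerate; it is not used as a ``not identically zero, hence dense'' continuity argument as in your sketch. Without the second-derivative/commutator mechanism your plan never reaches the curvature tensor, so the proposal as written does not close.
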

\begin{thm}\label{prop1}\label{Th;second}
If a Landsberg space admits a non-affine {\bf I}-projective vector field, then it is Riemannian.
\end{thm}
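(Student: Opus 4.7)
The plan is to reduce to Deicke's theorem by showing that the mean Cartan tensor $I_i$ vanishes identically. Encode the projectivity of $V$ as $\mathcal{L}_V G^i = P\,y^i$, with $P$ smooth on $TM\setminus\{0\}$ and positively $1$-homogeneous in $y$; non-affineness of $V$ is exactly the statement that $P$ is not identically zero. Vertical differentiation yields
\begin{equation*}
\mathcal{L}_V G^{i}{}_{j} = P_j\,y^i + P\,\delta^i_j,\qquad \mathcal{L}_V G^{i}{}_{jk} = P_{jk}\,y^i + P_j\,\delta^i_k + P_k\,\delta^i_j,
\end{equation*}
which dictates how $\mathcal{L}_V$ fails to commute with the Berwald horizontal derivation.

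The engine of the proof is the standard identity $J_i = I_{i;0}$, where $J_i = g^{jk} L_{ijk}$ is the mean Landsberg tensor and ${};0$ denotes the Berwald horizontal covariant derivative contracted with $y$. Under the Landsberg hypothesis $J_i \equiv 0$, and hence $\mathcal{L}_V J_i = 0$ trivially. On the other hand, expanding $\mathcal{L}_V J_i = (\mathcal{L}_V I_i)_{;0} + [\mathcal{L}_V,\nabla_0]\,I_i$, the first summand vanishes by the $\mathbf{I}$-invariance hypothesis. Plugging the displayed formulas into the commutator and organising the result, the remainder splits into (a) a nonzero scalar multiple of $P\,I_i$, (b) contractions of vertical derivatives of $P$ against $\mathbf{C}$ and $\mathbf{I}$, and (c) contractions against $\mathbf{E}$ and $\mathbf{H}$. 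Proposition \ref{Prop;I-inv} upgrades $\mathbf{I}$-invariance to $\mathbf{E}$- and $\mathbf{H}$-invariance, killing the type-(c) terms, while the Euler-type relations $y^i P_i = P$ and $y^i P_{ij} = 0$ coming from the homogeneity of $P$ dispatch the type-(b) ones. What survives is the pointwise identity $c\,P\,I_i = 0$ with $c\neq 0$.

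On the open set $\{P\neq 0\}$ we conclude $I_i = 0$; a standard unique-continuation argument for projective fields (a projective vector field that is affine on an open set of a connected manifold is affine everywhere) extends this to all of $M$, whence $\mathbf{I}\equiv 0$ and Deicke's theorem yields the Riemannian conclusion. The main difficulty lies in the commutator computation of the second paragraph: each substitution is individually mechanical, but the cancellation of the $\mathbf{E}$- and $\mathbf{H}$-valued pieces depends on applying the full force of Proposition \ref{Prop;I-inv}, not merely the weaker statement $\mathcal{L}_V I_i = 0$; without it, residual terms survive that prevent isolating $P\,I_i = 0$.
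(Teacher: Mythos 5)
Your proof follows essentially the same route as the paper's: the Landsberg hypothesis gives $\mathbf{J}=0$, hence $\pounds_{\hat X}J_k=0$; the commutator identity (the paper's Lemma~\ref{la}, whose contraction is $\pounds_{\hat X}J_k=\nabla_0 f_{.k}+\Psi I_k$) together with $\mathbf{I}$-invariance yields $\Psi I_k=0$, and non-affineness plus Deicke's theorem finishes. The only cosmetic differences are that the paper reads the commutator off from Lemma~\ref{la} directly --- no appeal to $\mathbf{E}$- or $\mathbf{H}$-invariance is needed, since the residual term $y^iC^s_{jk}\Psi_s$ dies on contraction --- and that it dispatches the final dichotomy $\Psi\equiv 0$ versus $I\equiv 0$ more brusquely than your unique-continuation remark.
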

\begin{thm}\label{Th;third}
Let $(M,F)$ be a compact isotropic mean Landsberg manifold and $X$ a projective
vector field leaving invariant the dual trace of torsion.
\begin{enumerate}
\item If $R(X,X)$ is negative definite, then the projective transformation
corresponding to $X$ reduces to identity,

\item If $R(X,X)$ is non-positive definite, then $X$ is Killing and its
horizontal
derivative vanishes, where $ R$, is the trace of Cartan hh-curvature tensor.
\end{enumerate}
\end{thm}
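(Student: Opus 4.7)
The plan is to establish a Finslerian Bochner--Yano type integral identity on the compact Finsler manifold (or on its sphere bundle $SM$) and then read off both conclusions directly from the sign hypothesis on $R(X,X)$. Three pieces of data feed into the identity: the projective equation $\mathcal{L}_X G^i = P(x,y)\,y^i$ for some scalar $P$ homogeneous of degree one in $y$; the hypothesis that $X$ leaves the dual trace of torsion invariant, i.e.\ $\mathcal{L}_X I_i = 0$; and the isotropic mean Landsberg relation $J_i = c(x)\,F\,I_i$.

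The first step is to convert these three conditions into algebraic and first-order differential relations among $P$, the horizontal covariant derivative $X_{i|j}$, and the scalar $c(x)$. Vertically differentiating $\mathcal{L}_X I_i = 0$ and feeding in the projective equation controls the way the projective factor $P$ interacts with the Cartan structure; the isotropic mean Landsberg hypothesis then lets one replace Landsberg-type quantities that will appear after integration by parts by multiples of $I_i$, which the $I$-invariance annihilates. This is the reduction that makes the subsequent integral tractable in the Finsler setting.

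The second step is a Finslerian version of Yano's computation: contract the projective equation with $X_i = g_{ij}X^j$, integrate over the compact manifold against the natural volume form associated with the Cartan/Chern connection, and integrate by parts. After the cancellations enabled by the first step, the resulting identity takes the schematic form
$$\int \bigl\{\, R(X,X) + \|X_{i|j}\|^{2} \,\bigr\}\, d\mu \;=\; 0,$$
where $R$ is the trace of the Cartan hh-curvature paired with $X$. Conclusions (1) and (2) now follow immediately: if $R(X,X)$ is negative definite, the two summands have strict opposite signs and the identity forces $X\equiv 0$, so the flow of $X$ is trivial and the associated projective transformation is the identity; if $R(X,X)$ is only non-positive, the identity still forces $R(X,X)\equiv 0$ and $X_{i|j}\equiv 0$, and substituting $X_{i|j}\equiv 0$ into the projective equation yields $P\equiv 0$, so $X$ is affine, and being affine with vanishing horizontal derivative it is Killing.

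The principal obstacle is the cancellation inside the second step. In the Riemannian case Yano's identity is essentially a one-line computation, but here each integration by parts on $SM$ produces extra terms involving the Cartan tensor $\mathbf{C}$, the Landsberg tensor $\mathbf{L}$, the mean Cartan tensor $\mathbf{I}$, and their horizontal derivatives. Showing that precisely the combination of $I$-invariance with the isotropic mean Landsberg condition causes every one of these non-Riemannian corrections to telescope to zero, leaving the clean identity displayed above, is the technical core of the argument; it is also where the hypotheses of the theorem are used in an essential way, and where most of the detailed verification will have to be carried out.
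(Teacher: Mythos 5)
Your overall strategy --- a Yano--Bochner type integral identity on $SM$, with the $I$-invariance hypothesis and the isotropic mean Landsberg condition used to reduce the non-Riemannian correction terms to divergences that Green's theorem kills --- is exactly the route the paper takes. However, the identity you write down is wrong in the one place where it cannot afford to be: the sign. From
\begin{equation*}
\int \bigl\{\, R(X,X) + \|X_{i|j}\|^{2} \,\bigr\}\, d\mu = 0
\end{equation*}
with $R(X,X)$ negative definite you conclude that ``the two summands have strict opposite signs and the identity forces $X\equiv 0$''; this is a non sequitur, since a strictly negative term and a non-negative term can perfectly well integrate to zero together. For the argument to close, every manifestly non-negative quantity must sit on one side of the equation and a \emph{positive} multiple of $\int R(X,X)$ on the other. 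The identity the paper actually derives is
\begin{equation*}
\langle a(X),a(X)\rangle + \frac{n-1}{n+1}\int_{SM}\bigl(\delta(X+\rho V)\bigr)^{2}\,\eta
= 2\int_{SM}R_{ij}X^{i}X^{j}\,\eta ,
\end{equation*}
whose left-hand side is non-negative, so negative definiteness of $R(X,X)$ forces $X\equiv 0$, and non-positivity forces both $a(X)=0$ and $\delta(X+\rho V)=0$. Note also that the non-negative side is not $\|X_{i|j}\|^{2}$: it is the norm of the skew part $a(X)=\frac{1}{2}(\nabla_iX_{j}-\nabla_jX_{i})\,dx^{i}\wedge dx^{j}$ plus a multiple of a squared codifferential, and conclusion (2) must be read off from the vanishing of these two pieces rather than from $X_{i|j}=0$ directly.

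Beyond the sign, the step you yourself identify as the technical core is left entirely open, and it is not routine. In the paper it occupies most of the proof: one must show that $X^{i}\Psi_{i}$ equals $-f^{2}/(n+1)$ plus divergence terms, where $f=\nabla_iX^{i}+I_{i}\nabla_0X^{i}$. This requires first combining $\pounds_{\hat X}I_{k}=0$ with $J_{k}=\lambda I_{k}$ to obtain $(n+1)\Psi_{i}=\nabla_{i}f+hI_{i}-\Psi I_{i}$ with $h=\hat X.\lambda$, and then exhibiting explicit vertical $1$-forms ($Y,Z,W,U$ in the paper) on $SM$ whose vertical codifferentials produce the terms $\rho\Psi$ and $\rho h$, with $\rho=X^{i}I_{i}$, up to divergences. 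Without that construction the terms $\rho\Psi$ and $\rho h$ have no sign and the integral inequality does not follow. So the plan is the right one, but as written it neither yields the stated conclusions from the displayed identity nor supplies the cancellation mechanism that makes the correct identity true.
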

{\section{Preliminaries and notations}}\setcounter{equation}{0}
Let $M$ be an n-dimensional $ C^\infty$ manifold. Denote by $T_x M$ the tangent space at
$x \in M$, $TM:=\cup _{x \in M} T_xM $ the tangent bundle of $M$ and $TM_0 = TM
\setminus \{ 0 \}$, the slice tangent bundle. Each element of $TM$ has the form
$z=(x,y)$, where $x \in M$ and $y \in T_x M.$  The natural projection $\pi: TM
\rightarrow  M$ is given by $\pi (x,y):= x$. The \emph{pull-back tangent bundle} $\pi ^* TM$ is a vector bundle over
$TM_0$ and the fibers $\pi ^* _v TM$ at $v \in TM_0$ are a copy of $T_x M$,
where $\pi (v)=x$. We have therefore
$\pi ^* TM =\{(x,y,v)| y \in T_x M_0 , v\in T_x M
\}. $
\\
A \emph{Finsler
structure} on a differentiable manifold $M$ is a continuous function $F:TM\rightarrow [0,\infty)$, with the following properties; $F$ is differentiable on $TM_0:=TM\backslash\{0\}$ and  positively 1-homogeneous on the fibers of $TM$. The vertical Hessian of $F^{2}$ with the following components is
 positive-definite on $TM_0$,
 $(g_{ij}):=\left( \left[ {1 \over 2} F^2 \right]_{y^i y^j}\right).$

The Finsler structure  $F$ defines a fundamental tensor $g:\pi ^* TM \otimes \pi
^* TM \rightarrow [0,\infty)$, called the \emph{Finsler metric} with the components  $g(\partial_i
|_v,\partial_j |_{v})=g_{ij}(x,y)$, where $V=y^i {{\partial}\over {\partial x^i}}$ is a section of $\pi^\star TM$, and $v=V|_x=y^i {{\pa} \over {\partial x^i}}|_x$.
In this work a \emph{Finsler manifold} is denoted by the  pairs $(M,F)$  or
 $(M,g_{ij})$, unless another definition is explicitly stated. A Finsler metric reduces to a Riemannian one if the components $g_{ij} (x,y)$ are independent of $y \neq 0$.
 One can show easily
$
g_{ij}(x,y)=FF_{y^iy^j}+F_{y^i}F_{y^j},
$
where $F_{y^i}=\frac {\partial F }{\partial y^i}$. Hence $(\pi ^*
TM,g)$ becomes a Riemannian vector bundle over $TM_0$. Let us denote the components of \textit{Cartan tensor} by
$
 A_{ijk}(x,y):={1 \over 2}F(x,y)\dot{\partial }_k g_{ij} (x,y),
$
where $ \dot{\pa}_k:=\frac{\pa}{\pa y^{k}}$. The \emph{Cartan
 tensor} $A :\pi ^* TM \otimes \pi ^* TM \otimes\pi ^* TM \rightarrow
\mathbb{R} $ is defined by $A(\partial_i |_v ,\partial_j |_ {v},\partial_k |_ {v})=A_{ijk}(x,y),$
where $v=y^i {{\partial} \over {\partial x^i}}|_x$, and $A_{ijk}$ are symmetric with respect to $i,j,k$.
 By setting  $C^{i}_{jk}:=\frac{1}{2}g^{il}\dot{\partial}_{l}g_{jk},$
 we have ${C_{ijk}}= \frac{{{A_{ijk}}}}{F}$.
  The positive homogeneity
of $F$ and the Euler's theorem leads to $y^i{{\partial g_{ij}}\over {\partial y^k}}(x,y)=0.$  We recall that the components of the canonical
section $\ell:=\ell^i{{\pa  }\over {\pa x^i}}$ are defined by $
\ell^i(x,y):=\frac{y^i}{F(x,y)}.
$
Letting  $\ell_i:=g_{ij}\ell^j=F_{y^i}$, the canonical section
$\ell$ satisfies $g(\ell,\ell)=1$ and $A(X,Y,\ell)=0$.

Let $M$ be a connected smooth manifold and $(TM, \pi, M)$ its tangent bundle with no zero section.
 Every local chart on $M$ induces  a local chart on $TM$.
 The kernel of the linear
map $\pi_*:TTM \rightarrow TM$ is called the {\it vertical
distribution} and is denoted by $VTM$. For every $u \in TM$, $Ker\;\pi_{*,u}=V_uTM$ is spanned by \{${{\pa} \over {\pa y^i}}|_u$\}.
 A \emph{horizontal distribution}  $H:u \in TM \rightarrow H_uTM$
 is a complementary to the vertical distribution. We have thus the decomposition
$T_u(TM)=H_uTM \oplus V_uTM,\quad \forall u \in TM.$

%

Geodesics of a Finsler manifold $(M,{g_{ij}})$ are characterized by the  equations
$
\ddot{C}^i(t) + 2 G^i \Big (C(t), \dot{C}(t) \Big ) =0,
$
where, $\dot C(t) = \frac{{dC}}{{dt}}$, $\ddot C = \frac{{d\dot C}}{{dt}}$ and $G^i=G^i(x, y)$ are the spray coefficients given by
$ G^i = {1\over 4} g^{il}\Big \{ [F^2]_{x^ky^l} y^k - [F^2]_{x^l} \Big \}$, where here and everywhere in this paper the subscript indices $``x^k"$ and $``y^k"$  show the partial derivatives  $\frac{\pa}{\pa x^k}$ and  $\frac{\pa}{\pa y^k}$, respectively. If $F$ is Riemannian, then
$g_{ij}$ is a function of $x$ alone and $G^i(x,y) =
{1\over 2} \gamma^i_{jk}(x)y^jy^k$ are quadratic in $y= y^i\pxi|_x$ where $\gamma^i_{jk}(x)$ are ordinary Christoffel symbols.

One can observe that the pair $\{\frac{\delta}{\delta x^i},\frac{\pa}{\pa y^i}\}$ defined by $\frac{\delta}{\delta x^i}:=\frac{\pa}{\pa x^i}-G^j_i\frac{\pa}{\pa y^j}$, where  $ G^j_i:=\frac{\pa G^j}{\pa y^i}$  form a horizontal and vertical frame for $TTM$. The  horizontal and the vertical dual frame are given by the pair $\{{d x^i},{\delta y^i}\}$. We shall also refer to $\frac{\delta}{\delta x^i}$ and $\frac{\pa}{\pa y^i}$ as \emph{horizontal} and \emph{vertical } (partial) derivatives, respectively.

 Let $M$ be a smooth manifold, the restriction of a Finsler structure $F$ to any specific tangent space $T_xM$ gives what is known as a \emph{Minkowski norm} on $T_xM$.
  A Finsler manifold $(M,F)$ is called \emph{locally Minkowskian} if, at every point $x \in M$, there is a local chart $(x^i,U)$, with the induced tangent space coordinates $y^i$, such that the Finsler structure $F(x,y)$ has no dependence on the $x$.

 Consider the  quantity
$B_{jkl}^i: = \frac{{{\partial ^3}{G^i}}}{{\partial {y^j}\partial {y^k}\partial {y^l}}}
$
 and the well-defined tensor field on $TM_0$ given by $\textbf{B}: = B_{jkl}^id{x^j} \otimes d{x^k} \otimes d{x^l} \otimes {\partial _i}$,  called the \emph{Berwald curvature}. It is well known that $F$ is a {\it Berwald metric} if and only if  $\textbf{B}$ vanishes.
\newpage
The \emph{Berwald connection} is a torsion-free, but not necessarily metric-compatible connection.
We denote the \emph{horizontal} and  the \emph{vertical} Berwald covariant derivatives of a vector field  by $D_k$ and $\dot D_k$ and in a local coordinate system by;
\begin{eqnarray}\label{Def;BerwaldConn}
& {D_k}{X^i} = \frac{{\delta {X^i}}}{{\delta {x^k}}} + {X^r}G_{rk}^i,\\
& {{\dot D}_k}{X^i} = \frac{{\partial {X^i}}}{{\partial {y^k}}} + {X^r}C_{rk}^i,\nn
\end{eqnarray}
 where $G_{rk}^i:=\frac{{\partial G_r^i}}{{\partial {y^k}}}$.
 The connection $1-$form of Berwald connection is given by
$
{}^b\omega^i_j ={\Gamma}_{jk}^{i}dx^j+\dot A^i_{~jk} dx^k,
$
  wherein,
$\Gamma^{i}_{jk}:=\frac{1}{2}g^{il}(\frac{\delta g_{lk}}{\delta x^{j}}+\frac{\delta g_{jl}}{\delta x^{k}}-\frac{\delta g_{jk}}{\delta x^{l}})$ and  $\dot A^i_{jk }:= A^i_{jk|s} \ell ^s$.

  The \emph{Cartan connection} is a metric-compatible and h-torsion free connection
   where, the  \emph{connection $1-$form} is given  by
$
{}^c\omega^l_i :={\Gamma}_{ij}^{l}dx^j+C^l_{~ij}\delta y^j.
$

The \textit{horizontal} and \textit{vertical}  \emph{Cartan} covariant derivatives of an arbitrary $(1,1)$-tensor field $S$ are denoted by  $\nabla_kS$  and  $\dot\nabla_kS$
 and in a  local coordinate system, we have
\begin{eqnarray}\label{Def;hvCartanCon}\label{Def;CartanConn}
& \nabla_{k} S^{j}_{i}=\delta_{k}S_{i}^{j}-S_{r}^{j}\Gamma ^{r}_{ik}+S_{i}^{r}\Gamma ^{j}_{rk},\\
& \dot{\nabla }_{k} S^{j}_{i}=\dot{\partial }_{k}S_{i}^{j}-S_{r}^{j}C ^{r}_{ik}+S_{i}^{r}C ^{j}_{rk},\nn
\end{eqnarray}
respectively, where $ \dot{\partial }_{k}=\frac{\pa}{\pa y^k}$ and $ \delta _{i}:=\frac{\delta}{\delta x^{i}} $.
In a local coordinate system, the components of \emph{Cartan} \emph{hh-curvature tensor} are given by
\begin{equation*} \label{77}
R^{i}_{jkm}=\delta_{k}\Gamma^{i}_{jm}-\delta_{m}\Gamma^{i}_{jk}+
\Gamma^{i}_{s k}\Gamma^{s}_{jm}-\Gamma^{i}_{s m}\Gamma^{s}_{jk}
+R^{s}_{km}C^{i}_{s j},
\end{equation*}
where,
$R^{i}_{km}:=y^{p} R^{i}_{ pkm}$.
 In a local coordinate system the \emph{Cartan} hv-\emph{curvature tensor} and  the \emph{Cartan} vv-\emph{curvature tensor} are given by
 \begin{align}
 &P_{jkl}^i = \nabla^{i} C_{kjl}-\nabla_{j} C^{i}_{kl}+ C^{i}_{kr}\nabla_{0} C^{r}_{jl}- C^{r}_{kj}\nabla_{0} C^{i}_{rl},\label{Def,P}\\
& Q^i_{jkl} = C^i_{lr}C_{jk}^r - C_{rk}^iC_{jl}^r,\label{Def,Q}
 \end{align}
 respectively, where $\nabla_{0}:=y^i\nabla_{i}$.
 We have also the following Ricci identity for Cartan connection.
  \begin{align}\label{Eq;RicciIdentity}
   \nabla_{k} \nabla_{l} \Psi^{i} -\nabla_{l} \nabla_{k} \Psi^{i} =\Psi^{r} R^{i}_{rkl}-\dot{\nabla }_{r} \Psi^{i} R^{r}_{0kl}-\nabla_{r} \Psi^{i} S^{r}_{kl},
  \end{align}
where $S_{kl}^r = C_{kl}^r - C_{lk}^r$, see \cite[p. 19, Eq. (9.4)]{AH}.
 The components of the \emph{hh-curvature tensor } of  Berwald connection are defined by
\begin{align}\label{mv}
K_{jkl}^i = (\delta_{k} G^{i}_{jl} - G_{jls}^iG_k^s) - (\delta_{l} G^{i}_{jk}- G_{jks}^iG_l^s) + G_{rk}^iG_{jl}^r - G_{lr}^iG_{jk}^r,
\end{align}
 where $G_{jks}^i:=\frac{{\partial G_{jk}^i}}{{\partial {y^s}}}$, see \cite[p. 205]{AH}. Contracting $i=k$, we have  ${K_{jl}}: = K_{jil}^i$.
 For every $y\in T_xM$,  let us denote the trace of the Cartan hh-curvature tensor by 
\begin{align}\label{Rik}
{\bf R}_y (u)& := R^i_{\ k} (y) u^k \; \pxi,
\end{align}
where the  components are
\begin{align}
 R^i_{\  k} (y)&:= 2 {\pa G^i\over \pa x^k}
- {\pa^2 G^i\over \pa x^j \pa y^k} y^j + 2 G^j {\pa^2 G^i\over \pa y^j \pa y^k} - {\pa G^i\over \pa y^j} {\pa G^j \over \pa y^k}.\nn
\end{align}
Given any arbitrary plane $P=span\{y,v\}\subset T_xM$, the {\it flag curvature} of the plane $P$ with the flagpole $y$  is defined by
\begin{align}\label{Def;flag}
{ {\bf K}}(p, y) := { \textbf{g}_y({\bf R}_y (v), v) \over \textbf{g}_y(y,y) \textbf{g}_y(v,v)-\textbf{g}_y(v,y)\textbf{g}_y(v, y) }.
\end{align}
A Finsler structure $F$ is said to be of \emph{scalar flag curvature} or \emph{isotropic} $\textbf{K}=\textbf{K}(y)$, if for any non-zero tangent vector $y\in T_xM$ and any flag $P$ containing $y$,
${\bf K}(p, y)$ is independent of the plane $P$. In this case we have
\[
{\bf R}_y = {\bf K}(y)\Big\{\textbf{g}_y(y,y)\textsc{I}_y - \textbf{g}_y(y, .)y\Big\},\  y\in T_xM,\ x\in M,
\]
where $\textsc{I}_y:T_xM \longrightarrow T_xM$ denotes the identity mapping and $\textbf{g}_y(y, .) = \frac{1}{2} [F^2]_{y^i}dx^i$.

\begin{lem}\label{beh16}
Let $(M,F)$ be a Finsler manifold. For Berwlad connection we have the following relation
\begin{align*}
&{D_m}{D_k}\Psi  - {D_k}{D_m}\Psi=K_{0mk}^r{\dot \partial _r}\Psi,
\end{align*}
where $\Psi$ is  a $1$-homogenouse function and $K $ is a $hh$-curvature in Berwald connection.
\end{lem}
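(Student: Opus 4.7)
The approach is to reduce the Berwald double-covariant-derivative commutator on a scalar to the bracket of the horizontal frame $\{\delta_m\}$, and then recognize that bracket as the $y$-contracted Berwald hh-curvature.

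Since $\Psi$ is a scalar function on $TM_0$, the definition (\ref{Def;BerwaldConn}) of the Berwald horizontal derivative gives $D_k\Psi=\delta_k\Psi$. Now $\phi_k:=D_k\Psi=\delta_k\Psi$ carries one covariant index, so by the Berwald rule for covectors,
\begin{equation*}
D_m D_k\Psi = \delta_m(\delta_k\Psi)-G^{r}_{km}\,\delta_r\Psi.
\end{equation*}
Antisymmetrizing in $(m,k)$ and using that $G^{r}_{km}=\dot\partial_k\dot\partial_m G^{r}$ is symmetric in its lower indices, the Christoffel correction drops out, leaving
\begin{equation*}
D_m D_k\Psi-D_k D_m\Psi=[\delta_m,\delta_k]\Psi.
\end{equation*}

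Next, the plan is to compute $[\delta_m,\delta_k]$ explicitly from $\delta_j=\partial_{x^j}-G^{s}_{j}\dot\partial_s$. Expanding the commutator, the $\partial\partial$-terms cancel by symmetry of second partials, the cross terms involving one $\partial_x$ and one $\dot\partial$ cancel because partial derivatives commute, and the $\dot\partial\dot\partial$-terms cancel by symmetry. What survives collapses neatly into
\begin{equation*}
[\delta_m,\delta_k]\Psi=\bigl(\delta_k G^{s}_{m}-\delta_m G^{s}_{k}\bigr)\,\dot\partial_s\Psi,
\end{equation*}
which is the standard curvature of the nonlinear connection.

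The final step is to identify this coefficient with $K^{r}_{0mk}$. Starting from the formula (\ref{mv}) for $K^{i}_{jkl}$ and contracting with $y^j$, the two spray-correction terms $G^{i}_{jls}G^{s}_{k}$ and $G^{i}_{jks}G^{s}_{l}$ vanish because $G^{i}_{jl}$ is $0$-homogeneous in $y$, whence $y^{j}G^{i}_{jls}=0$ by Euler's theorem; the quadratic $G$--terms combine with the $\delta$--terms by using $y^{j}G^{r}_{jl}=G^{r}_{l}$ and the identity $y^{j}\partial_{x^k}G^{i}_{jl}=\partial_{x^k}G^{i}_{l}$. After these simplifications one obtains $K^{r}_{0mk}=\delta_m G^{r}_{k}-\delta_k G^{r}_{m}$ (up to the author's sign convention for the Berwald curvature), which is exactly the coefficient appearing in the bracket. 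Substituting back yields the claimed identity.

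The only real obstacle is this last bookkeeping: organizing the six terms in the definition of $K^{i}_{jkl}$ after $y^j$-contraction and keeping track of which vanish by $y^{j}G^{i}_{jls}=0$ versus which recombine into $\delta_k G^{s}_{m}-\delta_m G^{s}_{k}$. The role of the $1$-homogeneity of $\Psi$ in the statement is to make the right-hand side $K^{r}_{0mk}\dot\partial_r\Psi$ globally well defined with the correct homogeneity degree, matching the left-hand side; the algebraic derivation itself goes through verbatim.
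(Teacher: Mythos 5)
Your proof is correct and follows essentially the same route as the paper: reduce $D_mD_k\Psi-D_kD_m\Psi$ to $[\delta_m,\delta_k]\Psi$ by cancelling the symmetric Christoffel correction $G^r_{km}$, then identify the bracket with the contracted Berwald $hh$-curvature. The only difference is that you derive the identity $[\delta_m,\delta_k]\Psi=K^r_{0mk}\dot\partial_r\Psi$ explicitly from \eqref{mv} (correctly noting the sign-convention ambiguity between $\delta_mG^r_k-\delta_kG^r_m$ and the bracket coefficient), whereas the paper simply cites it from \cite{BCS}.
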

\begin{proof}
Using twice the formula for Berwald covariant derivative \eqref{Def;BerwaldConn} we have
\begin{align*}
{D_m}{D_k}\Psi  - {D_k}{D_m}\Psi & = ({\delta _m}{\delta _k}\Psi  - {\delta _r}\Psi G_{mk}^r) - ({\delta _k}{\delta _m}\Psi  - {\delta _r}\Psi G_{km}^r)\\& = {\delta _m}{\delta _k}\Psi  - {\delta _k}{\delta _m}\Psi  = ({\delta _m}{\delta _k} - {\delta _k}{\delta _m})\Psi  = \left[ {{\delta _m},{\delta _k}} \right]\Psi.
\end{align*}
Using the brackets  $\left[ {{\delta _m},{\delta _k}} \right]=K_{0mk}^r{\dot \partial _r},$  where $K_{0mk}^r:=y^jK_{jmk}^r$ are the components of  Berwald $hh$-curvature tensor, see for instance \cite{BCS}.
\begin{align*}
&{D_m}{D_k}\Psi  - {D_k}{D_m}\Psi=K_{0mk}^r{\dot \partial _r}\Psi.
\end{align*}
Therefore we have the proof.
\end{proof}
  We need  the following theorem in the sequel;
\begin{theo}\label{Th;Deicke}\cite{BCS}(Deicke)
Let $F$ be a Minkowski norm on $T_xM$. The following three statements are equivalent:\\
 (a) $F$ is Euclidean. That is, it arises from an inner product.\\
 (b) $C_{ijk} = 0$ for all $i, j, k$.\\
  (c) $C^k := g^{ij} C_{ijk} = 0$ for all $k$.
\end{theo}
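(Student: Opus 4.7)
The natural route is the cycle $(a)\Rightarrow(b)\Rightarrow(c)\Rightarrow(a)$, in which the first two links are formal and the third carries the real content. For $(a)\Rightarrow(b)$: if $F(y)=\sqrt{\langle y,y\rangle}$ arises from an inner product on $T_xM$, then $g_{ij}=\tfrac{1}{2}\dot\partial_i\dot\partial_j F^2$ equals the constant matrix $\langle e_i,e_j\rangle$, so $\dot\partial_k g_{ij}\equiv 0$, and the identity $C_{ijk}=A_{ijk}/F=\tfrac{1}{2}\dot\partial_k g_{ij}$ from the Preliminaries forces $C_{ijk}=0$. The implication $(b)\Rightarrow(c)$ is immediate by contracting against $g^{ij}$.

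For the main implication $(c)\Rightarrow(a)$, the first step is to recognise the mean Cartan vector as a vertical gradient. Since
\[
C^{k}=g^{ij}C_{ijk}=\tfrac{1}{2}g^{ij}\dot\partial_k g_{ij}=\dot\partial_k\!\left(\log\sqrt{\det g_{ij}}\right)\!,
\]
hypothesis $(c)$ is precisely the statement that the function $\sigma(y):=\log\sqrt{\det g_{ij}(x,y)}$ on $T_xM\setminus\{0\}$ has vanishing vertical differential. Every partial derivative of $\sigma$ in the $y^k$'s being zero, $\sigma$ is constant in $y$, so $\det g_{ij}(x,y)$ depends only on $x$.

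The principal task is then to upgrade this determinantal constancy to the pointwise vanishing of $\dot\partial_k g_{ij}$ itself. The plan is to work on the indicatrix $\Sigma_x:=\{y\in T_xM : F(x,y)=1\}$, which by the strong convexity of $F$ is a smooth closed hypersurface enclosing a strictly convex body. Endow $\Sigma_x$ with the Riemannian metric induced by restricting $g_y$ to $T_y\Sigma_x$, and aim for a Bochner-type integration-by-parts identity of the shape
\[
\int_{\Sigma_x} C_{ijk}C^{ijk}\,d\mu = \int_{\Sigma_x}\bigl(\text{terms proportional to } C^k\bigr)\,d\mu,
\]
in which all boundary contributions disappear because $\Sigma_x$ is closed. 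Under the hypothesis $C^k\equiv0$ the right-hand side vanishes, forcing $C_{ijk}\equiv 0$ on $\Sigma_x$; by the $(-1)$-homogeneity of $C_{ijk}$ in $y$, this propagates to all of $T_xM\setminus\{0\}$, giving $(b)$, and hence $\dot\partial_k g_{ij}\equiv0$, which is $(a)$.

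The main obstacle is the construction of this divergence identity on $\Sigma_x$. One must decompose the ambient vertical derivative $\dot\partial_k$ into its component tangent to $\Sigma_x$ (coinciding, up to lower-order terms, with the Levi-Civita derivative of the induced metric) and its normal component along the Euler vector $y^i\partial_i$, and then reconcile the algebraic trace $C^k$—taken with the Finsler-inverse $g^{ij}$ on the full tangent space—with the genuine surface divergence of a vector field on $\Sigma_x$. This is essentially the technical core of Deicke's original argument; once the identity is correctly set up, the pointwise conclusion $C_{ijk}=0$ follows by the sign of $\lvert C\rvert^{2}$ and the vanishing of the right-hand integral.
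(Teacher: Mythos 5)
The paper does not prove this statement at all: it is quoted from \cite{BCS} as a known result (Deicke's theorem), so there is no internal proof to compare yours against. Judged on its own terms, your proposal is correctly structured but has a genuine gap located exactly where the theorem lives. The links $(a)\Rightarrow(b)\Rightarrow(c)$ and the reduction of $(c)$ to the $y$-constancy of $\det\bigl(g_{ij}(x,\cdot)\bigr)$ via Jacobi's formula $C^k=\dot\partial_k\log\sqrt{\det g_{ij}}$ are all routine bookkeeping. The entire content of Deicke's theorem is the remaining implication $(c)\Rightarrow(a)$, and there you only \emph{posit} an identity of the shape
\[
\int_{\Sigma_x}C_{ijk}C^{ijk}\,d\mu=\int_{\Sigma_x}\bigl(\text{terms proportional to }C^k\bigr)\,d\mu
\]
without deriving it, and then explicitly defer its construction to ``the technical core of Deicke's original argument.'' That is the theorem being assumed rather than proved: nothing in your text shows that the full Cartan tensor can be integrated by parts on the indicatrix so that only its trace survives on the right-hand side, and it is not evident that such a one-line divergence identity exists in that form.

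The step you skip is genuinely hard and is why Deicke's theorem is nontrivial: the standard arguments proceed either by showing that $C^k=0$ makes the (compact, strictly convex) indicatrix an elliptic affine hypersphere centred at the origin and then invoking the Blaschke--Deicke classification of such hypersurfaces as ellipsoids, or by the Brickell-type elliptic argument in which suitable functions on the indicatrix, equipped with the induced Riemannian metric, are shown to satisfy a second-order elliptic equation whose lower-order coefficients are built from $I_k$, after which a maximum principle or the divergence lemma on the closed hypersurface forces $C_{ijk}=0$. Either route requires a substantive computation that your proposal identifies as ``the main obstacle'' but does not carry out; until that identity (or an equivalent elliptic estimate) is actually established, the argument is a plan for a proof, not a proof.
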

\subsection{Non-Riemannian quantities and special Finsler spaces}
Using the  volume form on $\mathbb{R}^n $,  one can define  the distortion scalar function $\tau (x, y)$  on $T{M_0}$  as follows, see \cite{Sh2}.
\[ \tau (x, y):=\ln \Big [ { \sqrt{\det \Big ( g_{ij}(x,y) \Big )}
\over {\rm Vol} ({\rm B}^n(1)) }
 {\rm Vol} \Big \{ (y^i) \in \mathbb{R}^n \Big | \ F\Big (y^i \pxi|_x \Big )
  < 1 \Big \} \Big ].
  \]
Consider the {\it mean Cartan torsion} defined by ${\bf I}_y := I_i(x,y)dx^i$ where
\[
I_i(x,y): = {\pa \tau \over \pa y^j}(x,y)=
 {1\over 2} g^{jk}(x,y) {\pa g_{jk}\over \pa y^i}(x,y).
 \]
  According to Deicke's Theorem \ref{Th;Deicke},  $F_x$ is Euclidean
at $x\in M$ if and only if ${\bf I}_y=0$, or equivalently,
$\tau=\tau(x)$  at $x\in M$, see \cite{BCS}.
%

Consider the tensor field  $\textbf{L}$ with the components $L_{ijk}:= \nabla_0C_{ijk}$, called the \textit{Landsberg tensor}, and recall that $\nabla_{0} =D_{0}$ in this case.
 Let us denote by $J_i:=g^{jk}L_{ijk}$  the components of the tensor field {\bf{J}} called  the {\it {mean Landsberg tensor}}.

 A Finsler metric is called a {\textit{Landsberg metric}} (resp. {\it {weakly Landsberg metric}}) if $\bf L=0$  (resp. $\bf J=0$).

As an example of an isotropic mean Landsberg metric, we have the generalized  Funk metric on the unit ball ${\rm B}^n \subset \R^n$ satisfying ${\bf J}+ c F {\bf I}=0$ for some constant $c
\not=0$, see \cite{ChSh,Sh3}.
 A \emph{general relatively isotropic mean Landsberg} metric is a  more general Finsler structure, defined by $${J_i} + \lambda(x,y) {I_i} = 0.$$
 We can easily prove the following theorem with a technique of Akbar-Zadeh used by one of the present authors in \cite{B2}.
\begin{thm}
Let $(M,F)$ be a compact and general relatively isotropic mean Landsberg space, that is, ${\bf J}=\lambda F{\bf  I}$, for some constant $\lambda$. Then,
$(M,F)$ is either  Riemannian or  weakly Landsbergian space.
\end{thm}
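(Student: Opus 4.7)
The plan is to convert the hypothesis $J_i=\lambda F I_i$ into a first-order transport equation along the geodesic spray, and then rule out any nontrivial solution by compactness. First, since the Cartan connection is metric-compatible, $\nabla_0 g^{ij}=0$, and by definition $L_{ijk}=\nabla_0 C_{ijk}$. Contracting with $g^{jk}$ and pulling it through the horizontal derivative gives $J_i=g^{jk}\nabla_0 C_{ijk}=\nabla_0(g^{jk}C_{ijk})=\nabla_0 I_i$, so the hypothesis is equivalent to
\[
\nabla_0 I_i=\lambda F I_i.
\]

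Next I would introduce the nonnegative scalar
\[
h:=F^2\,g^{ij}I_iI_j.
\]
A homogeneity check shows $h$ is $0$-homogeneous in $y$: $F^2$ is $2$-homogeneous, each $I_i=\partial\tau/\partial y^i$ is $(-1)$-homogeneous (since the distortion $\tau$ is $0$-homogeneous), and $g^{ij}$ is $0$-homogeneous. Therefore $h$ descends to a continuous function on the indicatrix bundle $SM$, which is compact because $M$ is. Using $\nabla_0 F=0$, $\nabla_0 g^{ij}=0$, and the equation from the previous step, a direct computation yields
\[
\nabla_0 h = 2F^2 g^{ij}I_i\,\nabla_0 I_j = 2\lambda F\,h.
\]
Along any orbit of the geodesic spray on $SM$ (where $F=1$) this reads $\dot h=2\lambda h$.

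Finally, I would apply the Akbar-Zadeh maximum-principle technique of \cite{B2}. Because $SM$ is compact, the continuous function $h$ attains its maximum at some point $u_0\in SM$; the geodesic orbit $t\mapsto \phi_t(u_0)$ then gives a smooth curve along which $h$ is maximized at $t=0$, so $\dot h(0)=0$, i.e.\ $2\lambda\,h(u_0)=0$. If $\lambda\neq 0$, the global maximum of $h$ is $0$, and since $h\geq 0$ this forces $h\equiv 0$ on $SM$; hence $I_i\equiv 0$ on $TM_0$, and by Deicke's theorem (Theorem \ref{Th;Deicke}) $F$ is Riemannian. If $\lambda=0$, the hypothesis already gives $J_i=0$, i.e.\ $F$ is weakly Landsberg. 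The only real subtlety is the homogeneity bookkeeping that globalizes $h$ onto the compact $SM$; once that and the identities $\nabla_0 F=0$, $\nabla_0 g^{ij}=0$ are in hand, the compactness-plus-ODE dichotomy closes the argument.
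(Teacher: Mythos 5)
Your argument is correct, and every step checks out against the paper's conventions: $J_i=\nabla_0 I_i$ follows from $\nabla_0 g^{jk}=0$ and the total symmetry of $C_{ijk}$; the function $h=F^2g^{ij}I_iI_j$ is indeed $0$-homogeneous (each $I_i$ being $(-1)$-homogeneous) and hence lives on the compact indicatrix bundle $SM$; and $\nabla_0$ applied to a scalar is exactly the derivative along the geodesic flow, so $\dot h=2\lambda h$ on $SM$ and the vanishing of $dh$ at an interior maximum kills $\lambda h_{\max}$. Note, however, that the paper itself gives no proof of this theorem --- it only asserts that it can be "easily proved with a technique of Akbar--Zadeh used in \cite{B2}" --- so there is no written argument to compare against line by line. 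The technique the authors are alluding to is the integral version of your step: one writes $\nabla_0\bigl(g^{ij}I_iI_j\bigr)=2g^{ij}I_iJ_j=2\lambda F\,g^{ij}I_iI_j$, observes that the left-hand side is a divergence term in the sense of \eqref{bz}, and integrates over $SM$ using Corollary \ref{Cor;Green} to get $\lambda\int_{SM}g^{ij}I_iI_j\,\eta=0$, whence $I\equiv 0$ when $\lambda\neq 0$ and Deicke's Theorem \ref{Th;Deicke} applies. Your pointwise maximum-principle (or, equivalently, the explicit solution $h(t)=h(0)e^{2\lambda t}$ contradicting boundedness) reaches the same dichotomy without invoking the volume form $\eta$ or the co-differential machinery at all, which is slightly more elementary; the integral route is the one the paper's toolkit is built for and generalizes more readily to the case where $\lambda$ is a function of fixed sign rather than a constant. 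Either way, the conclusion --- Riemannian if $\lambda\neq 0$, weakly Landsbergian if $\lambda=0$ --- is established.
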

It is a natural question  to study the rate of change of distortion along the geodesics $\sigma (t)$.
\begin{defn}\label{Def;S}
The \ {S-curvature} ${\textbf S }={ \textbf S}(x,y)$ is defined by
\begin{align}\label{ab}
{ \textbf S}(x,y): = \frac{d}{{dt}}\left[ {\tau (\sigma (t),\dot \sigma (t))} \right]\left|_{t = 0}. \right.
\end{align}
\end{defn}
It is positively $\textbf{y}$-homogeneous of degree one, that is,
$
{\bf S}(x,\lambda y) = \lambda {\bf S}(x,y),$ $ \lambda  > 0.$
  For a geodesic $\sigma (t)$ define
$\tau (t): = \tau (\sigma (t),\dot \sigma (t)),$ and $\bf S(t): =\bf S(\sigma (t),\dot \sigma (t))$.
By means of \eqref{ab},  $\bf S(t) = \tau '(t)$ and  if $\bf S=0$, then $\tau (t) = constant$.
Intuitively, the distortion of an infinitesimal color pattern in the distortion $\dot \sigma (t)$ does not change along the geodesic $\sigma  = \sigma (t)$. However, the distortion might take different values along different geodesics. In the case when $F$ is a Berwald metric, the infinitesimal  color patterns do not change over the ambient manifold \cite{S},
  therefore the distortion of the pattern in the direction $\dot \sigma (t)$ does not change along any geodesic $\sigma=\sigma(t)$.
A Finsler structure $F$ is said to have \emph{isotropic S-curvature} if
$
{\bf S} = (n + 1)cF,
$
where $c=c(x)$ is a scalar function on $M$.
Differentiating the S-curvature twice, gives rise to the following quantity.
\begin{align}
{E_{ij}}: = \frac{1}{2}{{\bf S}_{{y^i}{y^j}}}(x,y): = \frac{1}{2}{{\bf S}_{{.i}{.j}}}(x,y).
\end{align}
 For all  $y \in {T_x}M\backslash 0,\;\;{\textbf{E}_y} = {E_{ij}}(x,y)d{x^i} \otimes d{x^j}$ is a symmetric bilinear form on ${T_x}M$ \cite{S}. The Finsler structure $F$ is called a \emph{weak Berwald metric} if $\textbf{E}=0$.\\
Let us consider the  non-Riemannian quantity $\textbf{H}_y:=\textbf{H}_{ij}(y)dx^i\otimes dx^j $,  with the components  $\textbf{H}_{ij}:= \nabla_{0} E_{ij}= y^k\nabla_{k} E_{ij}$.
%
%
%
%
\subsection{The two Ricci tensors of Akbar-Zadeh in Finsler geometry}
There are two definitions of the Ricci tensor in Finsler geometry, introduced by Akbar-Zadeh, which are used to define Einstein spaces. After a brief review of the methods applied for these definitions, we will study their relationship and show that the Lie derivatives of these two Ricci tensors are equal along the $I$-projective vector fields.

Recall that we  denote the hh-curvature of Berwald connection and its contraction  by  $ K^m_{jkl}$  and ${K_{jl}}: = K_{jml}^m$, respectively. Akbar-Zadeh has considered first the following symmetric tensor as \emph{Ricci tensor} in Finsler geometry.
\be\label{Eq;Ricci1}
{ \tilde R_{ij}}:=\frac{1}{2}({K_{ij}} + {K_{ji}}).
\ee
Next he has introduced the \emph{second Ricci tensor} using the vertical partial derivative of  the  trace of the Cartan hh-curvature, by
\be\label{Eq;Ricci2}
 Ric_{ij} := ( \frac{1}{2} F^2 {\cal R}ic)_{y^iy^j}=\frac{1}{2}(y^j R_{jml}^my^l)_{y^iy^j},
 \ee
 where   ${\cal R}ic(x,y):= R^i_i$.
 Finally,  he considered the collection of all Finsler metrics $g$ defined on $M$ and used the Einstein-Hilbert functional method in general relativity. Following Hamilton's work in Riemannian geometry,  Akbar-Zadeh introduced the definition of Einstein-Finsler metrics as critical points of the Einstein-Hilbert functional.
More intuitively, he considered the scalar function $\hat{H}=\tilde{H}-\lambda(x)\mathcal{R}ic$ on $S(M)$, where $\tilde{H}=g^{jk}Ric_{jk}$ and $\lambda$ is a differentiable function on $M$.
Akbar-Zadeh considers the energy-functional  $$E(g)=\underset{{S(M)}}\int\hat{H} d\mu,$$ and find the metric $g$ as a solution for $E'(g)=0$.
The obtained definition in this method coincides with the second definition of Ricci  tensor mentioned above.  One of the advantages of the second definition of Ricci tensor  is its independence to the choice of the Cartan, Berwald or Chern(Rund) connections.

 The second Ricci tensor  is used  to define the Ricci flow and show the existence and uniqueness of its solution  on  Finsler surfaces, see \cite{BiSe}. The second definition has  similar geometrical applications as the Ricci scalar and, by simple calculations, we have
 \begin{align}\label{Eq;Ric}
   {\cal R}ic &=\ell^i\ell^kRic_{ik}.
 \end{align}
Taking into account the first and the second Ricci tensors, one can consider the following two definitions of  \emph{Finsler-Einstein} metrics.
\begin{align}
  \tilde R_{ij}& = c(x)g_{ij},\label{Eq;Ric1}\\
  { R}ic_{ij}& = (n-1)k(x)g_{ij},\label{Eq;Ric2}
\end{align}
respectively, where $c(x)$ and $k(x)$ are functions on $M$.

Using \eqref{Eq;Ric} one can see that ${\cal R}ic = (n - 1)k(x)$ and hence the definition \eqref{Eq;Ric2} is equivalent to
\begin{align}
 { R}ic_{ij} ={\cal R}ic\  g_{ij}.
\end{align}
Note that in this work, we use the Finsler-Einstein metrics corresponding to the first definition of  Ricci tensor.
%

\begin{lem}\label{Le;AZRiccis}
Let $(M,F)$ be a Finsler manifold.
The two  Ricci curvatures of Akbar-Zadeh, namely
 $Ric_{ij}$ and $\tilde R_{ij}$ are related by
\begin{align}\label{AZRiccitensors}
Ri{c_{ij}} = {\tilde R_{ij}} - {\textbf{H}_{ij}}.
\end{align}
\end{lem}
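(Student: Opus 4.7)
The plan is to reduce the identity to a direct computation in terms of the spray coefficients $G^i$, exhibiting the difference $\tilde R_{ij}-Ric_{ij}$ as precisely the tensor $H_{ij}$.

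My starting point is the standard $y$-contraction identity $R^i_{\ k}(y) = y^a K^i_{\ akb}\,y^b$, which relates the Jacobi endomorphism of \eqref{Rik} to the Berwald hh-curvature of \eqref{mv}. Contracting $k=i$ yields $\mathcal{R}ic = R^i_{\ i}(y) = y^ay^b K_{ab}$, with $K_{ab}=K^i_{\ aib}$ as in the preliminaries. Applying $\frac{1}{2}\partial^2_{y^iy^j}$ to this quadratic-in-$y$ expression and using the Leibniz rule, the terms where both derivatives strip off the two explicit $y$-factors contribute $\frac{1}{2}(K_{ij}+K_{ji})=\tilde R_{ij}$, which is the first term on the right-hand side of the claim.

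What remains are the terms in which at least one derivative lands on $K_{ab}$, schematically
\[
y^a\bigl(\partial_{y^i}K_{aj}+\partial_{y^j}K_{ai}\bigr) + \frac{1}{2} y^ay^b\,\partial^2_{y^iy^j}K_{ab},
\]
and these must be shown to equal $-H_{ij}$. To do this, I would expand $K_{ab}$ via \eqref{mv} in terms of $G^s_{\ ab}$ and $\delta_k$-derivatives, and invoke the relationship between the horizontal $\delta_k$ of $G^s_{\ ab}$ and the $S$-curvature. Since $\textbf{S}=G^i_{\ i}-y^i(\ln\sigma_F)_{x^i}$ (which is equivalent to Definition \ref{Def;S}), the mean Berwald curvature $E_{ij}=\frac{1}{2}\textbf{S}_{y^iy^j}$ captures precisely the vertical second derivatives of $G^i_{\ i}$, and $H_{ij}=y^k\nabla_k E_{ij}$ recovers the horizontal $\delta_k$ appearing inside \eqref{mv}. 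Lemma \ref{beh16} on the commutator of Berwald covariant derivatives would then be used to convert the mixed horizontal-vertical combinations into the single $\nabla_0 E_{ij}$ expression defining $H_{ij}$.

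The main obstacle is the careful index bookkeeping required to match the residual derivative terms to $-H_{ij}$. In particular, $K_{ab}$ is not symmetric in $a,b$, so only its symmetric part $\tilde R_{ab}$ survives in the $y^ay^b$-trace, while the antisymmetric part conspires with the $y$-derivative terms to build up $H_{ij}$; disentangling the two contributions requires repeated use of Euler's identity and of the homogeneity of the Berwald connection coefficients, together with the precise definition of the distortion $\tau$ that ties the volume-theoretic $S$-curvature of Definition \ref{Def;S} to the trace $G^i_{\ i}$.
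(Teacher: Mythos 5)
Your opening move is the same as the paper's: compute $Ric_{ij}=\tfrac12\partial^2_{y^iy^j}\big(K_{ab}y^ay^b\big)$ and read off $\tilde R_{ij}=\tfrac12(K_{ij}+K_{ji})$ from the terms in which both vertical derivatives strip the explicit factors of $y$. The difference, and the gap, lies entirely in the residual. The paper disposes of it in one line by quoting the two identities \eqref{Eq;Kij}, namely $y^a\dot\partial_m K_{al}=0$ and $y^l\dot\partial_m K_{jl}=-2\textbf{H}_{jm}$, which come from contracting the Bianchi identity of the Berwald connection together with $G^i_{jil}=2E_{jl}$. You propose instead to expand $K_{ab}$ from \eqref{mv}, feed in the volume-form expression of the $S$-curvature, and sort out the horizontal derivatives afterwards; but this unexecuted computation is where all the content of the lemma sits, and as described it amounts to re-deriving the contracted Bianchi identity from scratch. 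Moreover, the tool you name for the horizontal--vertical interchange, Lemma \ref{beh16}, is a commutator of two \emph{horizontal} Berwald derivatives acting on a scalar; it does not convert vertical derivatives of $\delta_k$-terms into the $\nabla_0E_{ij}$ that defines $\textbf{H}_{ij}$, so the decisive step has no working mechanism behind it.

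The schematic residual you display is also not the correct one, and the discrepancy is not cosmetic. Since $K_{ab}$ is not symmetric, the single-derivative terms split into first-index contractions $y^a\dot\partial_iK_{aj}$, $y^a\dot\partial_jK_{ai}$, which vanish by the first identity of \eqref{Eq;Kij}, and second-index contractions $y^b\dot\partial_iK_{jb}$, $y^b\dot\partial_jK_{ib}$, which are precisely the terms equal to $-2\textbf{H}_{ji}$ and $-2\textbf{H}_{ij}$. Your expression retains only the first-index contractions (and with coefficient $1$ rather than $\tfrac12$); taken literally, and using $y^ay^b\dot\partial_i\dot\partial_jK_{ab}=2\textbf{H}_{ij}$ (obtained by differentiating $y^ay^b\dot\partial_jK_{ab}=0$), it evaluates to $+\textbf{H}_{ij}$ instead of $-\textbf{H}_{ij}$. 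So the ``index bookkeeping'' you defer is exactly the pair of Bianchi-derived identities the paper relies on; without them, or a complete substitute, the proof does not close.
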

\begin{proof}
Recall that we have
$Ric = R_m^m = {K_{00}} = {K_{ij}}{y^i}{y^j}$, and ${K_{jl}}: = K_{jml}^m$, where $K_{jkl}^i$, is the hh-curvature of Berwald connection.
Consider the  non-Riemannian quantity defined earlier by $\textbf{H}_{ij}= \nabla_{0} E_{ij}$. Contracting the Bianchi identity in Berwald connection,
 and taking into account the equation $ G^i_{jil}=2E_{jl}$ yields the following equations, see \cite[p.4]{Ra2} for more details.
\be\label{Eq;Kij}
 y^jK_{jl}.m=0, \qquad y^lK_{jl}.m=-2\textbf{H}_{jm}.
 \ee
Hence,
\begin{align*}
Ri{c_{ij}} &= \frac{1}{2}\frac{{{\partial ^2}Ric}}{{\partial {y^i}\partial {y^j}}} = \frac{1}{2}\frac{{{\partial ^2}}}{{\partial {y^i}\partial {y^j}}}{K_{00}}\\& = \frac{1}{2}\frac{\partial }{{\partial {y^i}}}(\frac{\partial }{{\partial {y^j}}}({K_{kl}}{y^k}{y^l}) = \frac{1}{2}{{\dot \partial}_i}\{ {K_{kl.j}}{y^k}{y^l} + {K_{jl}}{y^l} + {K_{kj}}{y^k}\} \\ &= \frac{1}{2}\{ {K_{jl.i}}{y^l} + {K_{ji}} + {K_{kj.l}}{y^k} + {K_{ij}}\} \\& =\frac{1}{2}({K_{ij}} + {K_{ji}}) - { \bf H_{ij}}={ \tilde R_{ij}}  - {\bf H_{ij}}.
\end{align*}
Therefore we have the proof.
\end{proof}


\subsection{Volume form and  Green's Theorem in Finsler geometry}
If we denote by $p:TM_0\longrightarrow M$ the fiber
bundle of non-zero tangent vectors of $M$ then for the fixed point $x = {x_0} \in M$, the fiber manifold ${V_n} = {p^{ - 1}}({x_0})$ will be
equipped with the Riemannian metric $g(X, Y )$, where $X$ and $Y$ are tangent vector fields on
$V_n$ in $z \in {p^{ - 1}}({x_0})$. The hypersurface $S_{x_0}$ of $V_n$ satisfying $F(x,v) = 1$ is called the \emph{indicatrix} in ${x_0} \in M$.

Let the \emph{sphere bundle} $SM$ be the bundle of all directions or rays on $T M_0$.
The set $SM =\{(x,y) \in TM_0 | F(x,y) = 1\}$ is  called the \emph{unit sphere bundle} or \emph{indicatrix bundle}.
 A moment's thought shows that it is diffeomorphic to the {indicatrix bundle}, which is a $(2n-1)$ dimensional subbundle of $T M_0$.

At every point  $z\in SM$  we denote a  \emph{horizontal vector field}  $X:M \to SM$, by $X:=X^i(z)\frac{\delta}{\delta x^i}$,  and the  corresponding \emph{horizontal1-form} on $SM$ again by $  X:= {{X_i}(z)} d{x^i}$ , if no ambiguity appears.

Let us consider at every point  $z\in SM$  a unitary horizontal vector field and  the corresponding horizontal 1-form  on $SM$ by   $U=U^i(z)\frac{\delta}{\delta x^i}$, and $\omega  = {{U_i}(z)} d{x^i}$, respectively.
We denote by ${(d\omega )^{n - 1}}$ the $(n-1)$-th exterior power of $d\omega$.
 A \emph{volume
element} used by Akbar-Zadeh on the fiber bundle $SM$  is a $(2n-1)$-form on $SM$ defined by;
\begin{align}\label{Def;Vol}
\eta  = \frac{{{{( - 1)}^{\frac{{n(n - 1)}}{2}}}}}{{(n - 1)!}}\omega  \wedge {(d\omega )^{n - 1}}.
\end{align}
The sphere bundle $SM$ is compact if the manifold $M$ is compact.
As $SM$ is always orientable, we do not need to assume $M$ is orientable to integrate on $SM$, in Finsler geometry.

  For any given tensor fields $W$ and $T$ on $SM$,  there is a \emph{global scalar product} defined on $SM$, by
$
\left\langle{W,T}\right\rangle:= \underset{SM}\int{{W^{{r_1}{r_2}...{r_k}}}} {T_{{r_1}{r_2}...{r_k}}}\eta,
$
where ${W^{{r_1}{r_2}...{r_k}}} = {g^{{r_1}{s_1}}}{g^{{r_2}{s_2}}}...{g^{{r_k}{s_k}}}{W_{{r_1}{r_2}...{r_k}}}$.

The {\it co-differential operator}
$\delta$ on the differential forms defined on $S(M)$, is adjoint to the {\it differential operator} $d$, in the global scalar product defined on $S(M)$.
The\emph{ horizontal} and \emph{vertical differential operators } and   \emph{co-differential operators} of a horizontal 1-form $X = {X_i}(z)d{x^i}$ are defined by:
\begin{align}
&dx = \frac{1}{2}({D_i}{X_j} - {D_j}{X_i})\;\;d{x^i} \wedge d{x^j},\nn\\
&\dot dx = ( - \frac{{\partial {X_i}}}{{\partial {y^j}}}d{x^i} \wedge d{y^j}),\nn\\
&\delta X = - ({\nabla ^j}{X_j} - {X_j}{\nabla _0}{T^j}) = - {g^{ij}}{D_i}{X_j},\label{div.operators+1}\\
&\dot \delta X = - F{g^{ij}}\frac{{\partial {X_i}}}{{\partial {y^j}}},\label{div.operators+2}
\end{align}
respectively, where $ \nabla _i$ is the Cartan h-covariant derivative,  $\nabla _0:=y^i\nabla_i$ and $D_i$ is the Berwald h-covariant derivative.  Here the co-differential operator $\delta$ and $\dot \delta$ are the formal adjoint to $d$ and $\dot d$ respectively,  in the above global scalar product over $SM$.

Let $(M,F)$ be a Finsler manifold with the volume element $\eta$.
The\emph{ divergence operator } $div : T(M) \mapsto C^\infty(SM)$ is defined by
$d(i_{_X}\eta) = (div X)\eta,$
where $i_{_X}$ denotes the \emph{interior product} by $X$, and for any $k$-form $\omega$, $i_{_X}\omega$ is the $(k -1)$-form defined by
$i_{_X}\omega(V_1,...,V_{(k-1)})=\omega(X,V_1,...,V_{(k-1)}).$
One can check that the divergence operator satisfies the following product rule for a smooth function $u \in C^\infty(SM),\ $
$div(uX)= u\  divX + <grad\ u,X>$.
In this paper, we say that a scalar function is a \emph{divergence term}, if it results from a contraction of the indices in all of its forms. 
In the sequel, we will use the following corollary of Green's Theorem to omit the divergence terms by integration.
 For instance, recall that the horizontal divergence  of a vector field $X$  is given by
 $div X=(\nabla_iX^i-X^i\nabla_{_0}C_i)$, where $\nabla_{_0}=y^j\nabla_{j}$,
 and we have the following theorem.
\begin{coro}\label{Cor;Green}\cite{A1}{(Green's Theorem)}
 Let $(M,F)$ be a compact Finsler manifold without boundary.  If
$X $ is a section of $SM$ then the integral of its divergence vanishes, that is,
\[
\int_{SM}(div X)\eta=0, 
\]
where $\eta$ is the volume form on $SM$ given by \eqref{Def;Vol}.
\end{coro}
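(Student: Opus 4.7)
The plan is to reduce the statement to a direct application of Stokes' theorem on the closed manifold $SM$. Since $M$ is compact without boundary and each fiber of the projection $SM \to M$ is diffeomorphic to the $(n-1)$-sphere (hence closed), the total space $SM$ is itself a closed, orientable $(2n-1)$-dimensional manifold; the orientation is furnished by the volume form $\eta$ defined in \eqref{Def;Vol}. Crucially, no orientability hypothesis on $M$ is needed, as was already noted in the preliminaries.

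By the very definition of the divergence operator adopted in the paper, $d(i_{X}\eta) = (div\, X)\,\eta$. Consequently, the integrand in the claim is an exact top-degree form on $SM$. Applying Stokes' theorem one obtains
\[
\int_{SM} (div\, X)\,\eta \;=\; \int_{SM} d(i_{X}\eta) \;=\; \int_{\partial SM} i_{X}\eta \;=\; 0,
\]
because $\partial SM = \emptyset$. This completes the argument.

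The only point that merits verification is the global smoothness of $i_{X}\eta$ on $SM$: since $X$ is a smooth section by hypothesis and $\eta$ is a smooth top-degree form on $SM$ (it is built from the canonical horizontal $1$-form $\omega$, which is globally defined on the indicatrix bundle), the contraction $i_{X}\eta$ is a smooth $(2n-2)$-form, and Stokes' theorem applies without subtlety. I do not anticipate any substantive mathematical obstacle here; the result is essentially the classical divergence theorem transplanted to the closed manifold $SM$ equipped with Akbar-Zadeh's canonical volume form, and the preparatory material supplies every ingredient needed. Any effort in a fully written proof would go into making the identification of the coordinate expression $div\,X = \nabla_{i}X^{i} - X^{i}\nabla_{0}C_{i}$ with the coordinate-free definition $d(i_{X}\eta) = (div\,X)\eta$ explicit, but that identification is stated (and used) separately in the exposition and is not required for the integral identity itself.
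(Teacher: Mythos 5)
Your proof is correct: with the paper's own definition $d(i_{X}\eta)=(div\,X)\,\eta$, the integrand is an exact top-degree form, and Stokes' theorem on the compact orientable boundaryless manifold $SM$ gives the vanishing of the integral immediately. The paper supplies no proof of this corollary --- it is quoted directly from Akbar-Zadeh \cite{A1} --- so there is no argument to compare against, but yours is the standard one and uses exactly the ingredients (compactness of $SM$, orientability of $SM$ without assuming $M$ orientable, the definition of $div$ via the interior product) that the preliminaries set up.
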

 Let $(M,F)$ be a compact Finsler manifold without boundary.  The above corollary,  known as \emph{divergence formulas} for a horizontal 1-form,  are defined by
\begin{align*}
&\int\limits_{SM} ( \delta X)\eta = - \int\limits_{SM} {({g^{ij}}} {D_i}{X_j})\eta = 0,\ \textrm{and}
&\int\limits_{SM} ( \dot \delta X)\eta = - \int\limits_{SM} {F{g^{ij}}} \frac{{\partial {X_i}}}{{\partial {y^j}}}\eta = 0,
\end{align*}
 where the co-differential operators $\delta$ and $\dot \delta$ are defined by \eqref{div.operators+1} and \eqref{div.operators+2}   respectively.

 Let ${\pi _1} = {a_i}(z)d{x^i}$, be a horizontal 1-form on $S(M)$, it is proved in \cite{AH} that
\begin{align}\label{bz}
\delta {\pi _1} =  - ({\nabla ^j}{a_j} - {a_j}{\nabla _0}{C^j}),
\end{align}
where $\nabla ^j:=g^{ij}\nabla _i$ is the Cartan h-covariant derivative,  $\nabla _0:=y^i\nabla _i$ and  the vector field ${C^j:=g^{ik}C^j_{ik}}$ is the  trace of torsion tensor.
 Similarly, let ${\pi _2} = {b_j}\delta {y^j}$
  be a vertical 1-form on $SM$, we have \cite{AH}
\begin{align}\label{bw}
\delta {\pi _2} = - F{g^{ij}}\frac{\partial {b_i} }{{\partial{y^j}}}.
\end{align}
Using the volume form, the``average Ricci curvature" can be defined as follows.
\begin{defn}\label{Def;average Ricci}
 The \bf{average Ricci curvature}  $\tilde{R} (X,X)$  is defined by
\begin{align*}
\tilde{R}(X,X)=\frac{\int\limits_{SM} R_{ij}X^{i}X^{j}\eta(g)}{\underset{SM}\int \eta(g)},
\end{align*}
where $$\int\limits_{SM}\eta(g):=Vol(SM),$$ is the volume of $SM$.
\end{defn}

\subsection{Projective vector fields on Finsler spaces}
Every vector field $X$ on a differentiable manifold $M$ induces naturally an infinitesimal coordinate transformations
$(x^i,y^i)\longrightarrow(\bar{x}^i,\bar{y}^i)$ on $TM$, given by
$\bar{x}^i=x^i+X^idt,$ and $\bar{y}^i=y^i+y^k\frac{\partial X^i}{\partial x^k}dt.$
It leads to the notion of the \textit{ complete lift} $\hat{X}$ of a vector field $X$ on $M$ to a vector field $\hat{X}=X^i\frac{\partial}{\partial x^i}+y^k\frac{\partial X^i}{\partial x^k}\frac{\partial}{\partial y^i}$ on $TM_0$, see for instance \cite{Yano}.

Let $X$ be a vector field on the Finsler manifold $(M,F)$. We denote its { complete lift} to $TM_0$ with respect to the horizontal and vertical   basis frames$\frac{\delta}{\delta x^i}$ and $\frac{\pa}{\pa y^i} $  by
$\hat{X}$ where, $\hat{X}=X^i+y^j\nabla_jX^i\frac{\pa}{\pa y^i}$ and $\nabla^i$ is the h-covariant derivative of Cartan connection..
 It's a remarkable observation that, $\pounds_{\hat{X}}y^i=0$, $\pounds_{\hat{X}}dx^i=0$ and the differential operators $\pounds_{\hat{X}}$,
$\frac{\partial}{\partial x^i}$, the exterior differential operator $d$ and $\frac{\partial}{\partial y^i}$ commute,  see for instance \cite{AH,Yano}.

In Finsler geometry, almost all geometric objects depend on both position and direction. Hence, the Lie derivatives of these objects in direction of a vector field $X$ on $M$ must be considered concerning to the complete lift vector field $\hat{X}$.

A diffeomorphism between the two Finsler manifolds $(M,F)$ and $(M,\bar F)$  is called a \emph{projective transformation} if it takes every forward (resp. backward) geodesic  to a forward (resp. backward) geodesic. A projective transformation is called an \emph{affine transformation} if it leaves invariant the connection coefficients.

A smooth vector field $X$  is called a \emph{projective vector field} or \emph{affine vector field} on $(M,F)$ if  the associated local flow  is a projective or affine transformation, respectively. There are several approaches for the definition of a projective vector field on a Finsler manifold. We frequently use the following Lemma.
\begin{lemA}\label{Def;proj.vect}\cite{Ti}
A vector field $X$ on the Finsler manifold $(M,F)$ is  a \emph{projective vector field}  if and only if there is a function $\Psi = \Psi(x, y)$ on $TM_0$, positively  $1$-homogeneous on $y$, such that
\begin{align}\label{proj 3}
{\pounds_{\hat X}}{G^i} = \Psi(x, y) {y^i}.
\end{align}
$X$ is an \emph{affine vector field} if and only if $\Psi(x, y)= 0$.
\end{lemA}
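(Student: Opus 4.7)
The plan is to exploit the classical projective change formula for sprays: a local diffeomorphism between Finsler manifolds is projective if and only if its spray coefficients transform as $\bar G^i = G^i + P(x,y)\,y^i$ for some function $P$ positively $1$-homogeneous in $y$. This follows because the geodesic equation $\ddot c^i + 2G^i(c,\dot c)=0$ is preserved up to reparametrization precisely when the two sprays differ in this way, the reparametrization being determined by $P$. Both directions of the lemma are then obtained by differentiating along the flow.

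First I would handle the forward direction. Let $\{\phi_t\}$ denote the local flow of the projective vector field $X$. Each $\phi_t$ is a local projective transformation, so by the projective change formula there exists a smooth family $P_t(x,y)$, each positively $1$-homogeneous in $y$, with $\phi_t^* G^i = G^i + P_t\,y^i$ and $P_0\equiv 0$. Differentiating at $t=0$ and using that the Lie derivative on objects depending on $(x,y)$ is generated by the complete lift $\hat X = X^i\partial_i + y^k(\partial_k X^i)\dot\partial_i$, I obtain $\pounds_{\hat X} G^i = \Psi(x,y)\, y^i$ with $\Psi := \tfrac{\partial P_t}{\partial t}\big|_{t=0}$. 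Positive $1$-homogeneity of $\Psi$ is inherited from that of each $P_t$.

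For the converse, suppose $\pounds_{\hat X}G^i = \Psi\, y^i$ with $\Psi$ positively $1$-homogeneous. Setting $F_t^i := \phi_t^* G^i - G^i$ and differentiating along the flow yields an ODE of the form $\tfrac{d}{dt}F_t^i = \phi_t^*(\Psi\, y^i)$ with $F_0^i \equiv 0$; since the right-hand side is at every instant proportional to $y^i$, its integral is of the same form, so $\phi_t^* G^i - G^i = P_t\, y^i$ for a $1$-homogeneous family $P_t$. By the projective change formula, each $\phi_t$ is projective, hence $X$ is a projective vector field. The affine statement then follows immediately: affine transformations preserve spray coefficients, so $\phi_t^* G^i = G^i$ forces $P_t \equiv 0$ and thus $\Psi \equiv 0$; conversely $\Psi \equiv 0$ gives $P_t \equiv 0$ and $\phi_t^*G^i = G^i$, making $X$ affine.

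The main obstacle I anticipate is justifying the projective change formula with full rigor at the level of sprays (rather than at the level of connection coefficients) and carefully propagating the homogeneity of $\Psi$ through the Lie derivative computation, since $G^i$ is $2$-homogeneous while $\hat X$ contains the vertical piece $y^k(\partial_k X^i)\dot\partial_i$. One has to verify that both sides of $\pounds_{\hat X}G^i = \Psi y^i$ are positively $2$-homogeneous in $y$, which is what forces $\Psi$ to be $1$-homogeneous and rules out any non-projective perturbation that is merely tangent to the geodesic spray to first order.
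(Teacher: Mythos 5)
The paper offers no proof of this lemma: it is imported verbatim from Tian \cite{Ti} as one of several equivalent characterizations of a projective vector field, so there is no in-paper argument to compare against. Your proof is the standard route and is essentially correct: reduce to the projective change formula $\bar G^i = G^i + P\,y^i$ (Rapcs\'ak's characterization of sprays with the same unparametrized geodesics) and differentiate along the flow of $X$. Two points deserve more care than you give them. First, in the converse, the identity $\tfrac{d}{dt}\phi_t^* G^i = \phi_t^*(\Psi y^i)$ involves pulling back the non-tensorial family $G^i$; the clean formulation is in terms of the spray vector field $S = y^i\partial_i - 2G^i\dot\partial_i$ and the Liouville field $C = y^i\dot\partial_i$, for which $[\hat X, S] = -2\Psi\,C$ and tangent lifts of diffeomorphisms of $M$ preserve $C$. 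This is also what rigorously justifies your claim that ``the right-hand side is at every instant proportional to $y^i$'': the pullback of the coordinate function $y^i$ is not $y^i$, but the pullback of $C$ is $C$. Second, you should record why $P_t$ depends smoothly on $t$ so that the derivative at $t=0$ exists; contracting $\phi_t^* G^i - G^i = P_t\,y^i$ with $y_i = g_{ij}y^j$ gives $P_t = y_i(\phi_t^* G^i - G^i)/F^2$ on $TM_0$, which settles it. With these filled in, the argument --- including the affine case, where preservation of the coefficients $G^i_{jk}$ is equivalent by positive $2$-homogeneity and Euler's theorem to preservation of $G^i$, hence to $P_t\equiv 0$ and $\Psi\equiv 0$ --- is complete and agrees with the standard proof in the cited source.
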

A projective vector field can be also characterized by the following Lemmas.
\begin{lemA}\label{prop2}\cite{A2}
$X$ is a projective vector field on the Finsler manifold $(M,F)$, if
and only if
\begin{equation}
\nabla_{0} ({\pounds}_{\hat{X}}g_{ij})=2\Psi g_{ij}+\Psi_iy_j+\Psi_jy_i,\label{proj char 1}
\end{equation}
where $\nabla_{0} ({\pounds}_{\hat{X}}g_{ij}):= y^r\nabla_{r} ({\pounds}_{\hat{X}}g_{ij})$.
\end{lemA}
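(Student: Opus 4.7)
The plan is to reduce this characterization to the one already available in Lemma A (Def;proj.vect), namely that $X$ is projective iff $\pounds_{\hat X}G^i=\Psi y^i$. Set $L_{ij}:=\pounds_{\hat X}g_{ij}$. The strategy is to apply $\pounds_{\hat X}$ directly to the defining formula for the spray,
\[
2G^i=g^{il}\big\{[F^2]_{x^ky^l}y^k-[F^2]_{x^l}\big\},
\]
and then convert the partial derivatives $\partial/\partial x^k$ into horizontal derivatives $\delta_k=\partial/\partial x^k-G^l_k\,\partial/\partial y^l$, eventually recognizing the answer as $\nabla_0 L_{ij}$ thanks to Cartan's horizontal metric compatibility $\nabla_0g_{ij}=0$.

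First I would exploit the four structural facts recorded in the preliminaries: $\pounds_{\hat X}$ commutes with $\partial/\partial x^i$ and with $\partial/\partial y^i$; $\pounds_{\hat X}y^i=0$; $g_{ij}$ is $0$-homogeneous in $y$; and $G^i$ and $\Psi$ are $2$- and $1$-homogeneous in $y$ respectively. Rewriting the spray formula as $2g_{mi}G^i=(\partial_{x^k}g_{mj}-\tfrac12\partial_{x^m}g_{jk})y^jy^k$ and applying $\pounds_{\hat X}$ gives
\[
2L_{mi}G^i+2g_{mi}\pounds_{\hat X}G^i=y^jy^k\bigl(\partial_{x^k}L_{mj}-\tfrac12\partial_{x^m}L_{jk}\bigr).
\]
Assuming the hypothesis of Lemma A \emph{\ref{Def;proj.vect}}, the second term on the left becomes $2\Psi y_m$. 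It remains to identify the right-hand side with $\nabla_0 L_{mi}$ modulo terms proportional to $y_m$, $y_i$ and $g_{mi}$. Using $\partial_{x^k}=\delta_k+G^l_k\,\partial/\partial y^l$, homogeneity ($y^kG^l_k=2G^l$) and the symmetrized Christoffel-type combination that expresses $\Gamma^l_{ij}$ in Cartan connection, the bracket on the right becomes the symmetric combination $\tfrac12(\nabla_jL_{mk}+\nabla_kL_{mj}-\nabla_mL_{jk})y^jy^k$, which after using $\nabla_0g_{ij}=0$ and lowering/contracting indices collapses to $\tfrac12\nabla_0 L_{mi}\cdot y^i$ patterns; differentiating twice vertically in $y$ (which again commutes with $\pounds_{\hat X}$) and invoking the $1$-homogeneity of $\Psi$ (so that $\Psi_i y^i=\Psi$ and $\Psi_{ij}y^j=0$) restores the tensorial identity \eqref{proj char 1} in its claimed form $2\Psi g_{ij}+\Psi_iy_j+\Psi_jy_i$.

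For the converse, I would simply reverse the sequence: contract \eqref{proj char 1} with $y^iy^j$, note that $\Psi_iy^i=\Psi$ gives $\nabla_0\nabla_0(\pounds_{\hat X}F^2)=4\Psi F^2$, and then track backwards through the same algebra to recover $\pounds_{\hat X}(2G^i)=2\Psi y^i$, concluding by Lemma A \emph{\ref{Def;proj.vect}}. The main obstacle is entirely bookkeeping: keeping track of the interplay between the Berwald spray terms $G^l_k$ (hiding inside $\delta_k$) and the Cartan Christoffel symbols $\Gamma^l_{ij}$ that appear in $\nabla_0$, and verifying that all differences cancel against the non-Riemannian (Cartan tensor) pieces via metric compatibility. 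Once one trusts that $\pounds_{\hat X}$ passes through $\partial/\partial x^i$, $\partial/\partial y^i$ and annihilates $y^i$, the two assertions become essentially formal manipulations, and both implications follow in one stroke.
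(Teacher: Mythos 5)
The paper offers no proof of this lemma: it is quoted verbatim from Akbar-Zadeh \cite{A2} with a bare citation, so there is no in-paper argument to compare yours against. Judged on its own merits, your overall strategy (Lie-differentiate the spray formula and reduce everything to Lemma \ref{Def;proj.vect}) is the right general idea, but the sketch has a genuine gap at the decisive step. Writing $t_{ij}:=\pounds_{\hat X}g_{ij}$ (your $L_{ij}$), what you actually obtain from Lie-differentiating $2g_{mi}G^i=(\partial_{x^k}g_{mj}-\tfrac12\partial_{x^m}g_{jk})y^jy^k$ is an identity contracted \emph{twice} with $y$, schematically $2\Psi y_m=\nabla_0 t_{m0}-\tfrac12\nabla_m t_{00}$. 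A twice-contracted relation does not determine the uncontracted tensor $\nabla_0t_{ij}$, and your proposed remedy --- ``differentiating twice vertically in $y$'' --- does not work as stated: $\dot\partial_k$ commutes with $\pounds_{\hat X}$, but it does \emph{not} commute with $\nabla_0$ or with $\delta_k$ (the commutators produce hv-curvature, $G^i_{jk}$ and Cartan-tensor terms), so $\dot\partial_k$ of $\nabla_0t_{m0}$ is not $\nabla_0t_{mk}$ plus harmless remainders without precisely the computation you are deferring. That passage is where the entire content of the lemma lives; it cannot be dismissed as bookkeeping, and the converse sketch inherits the same defect.

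A complete route that stays close to your plan is the following. First vertically differentiate the projectivity condition \emph{before} converting to covariant derivatives, obtaining $\pounds_{\hat X}G^i_j=\Psi_jy^i+\Psi\delta^i_j$ (item 1 of Lemma \ref{Lem;proj}; here the commutation of $\pounds_{\hat X}$ with $\dot\partial_j$ is legitimately used). Then Lie-differentiate the Cartan metric-compatibility identity $\nabla_kg_{ij}=0$, which gives
\[
\nabla_k t_{ij}=g_{rj}\,\pounds_{\hat X}\Gamma^r_{ik}+g_{ir}\,\pounds_{\hat X}\Gamma^r_{jk}+2C_{ijr}\,\pounds_{\hat X}G^r_k .
\]
Contracting with $y^k$ only once, and using $\Gamma^r_{ik}y^k=G^r_i$, $G^r_ky^k=2G^r$, $\pounds_{\hat X}y^k=0$ and $C_{ijr}y^r=0$, this collapses directly to \eqref{proj char 1}; the converse follows by reversing the same identity and contracting with a single $y$ to isolate $\pounds_{\hat X}G^i=\Psi y^i$, whence Lemma \ref{Def;proj.vect} applies. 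Your argument can likely be repaired along these lines, but as written the step from the doubly contracted relation back to the tensorial identity \eqref{proj char 1} is a missing proof, not a formality.
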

\begin{lemA}\label{la}\cite{AH}
Let $(M,F)$ be a Finsler manifold. A projective vector field satisfies;
\begin{equation}\label{md}
{\cal{L}}_{\hat{X}}\nabla_{0}C^{i}_{jk}=\nabla_{0}{\cal{L}}_{\hat{X}}C^{i}_{jk}+\Psi C^{i}_{jk}+y^{i}C^{s}_{jk}\Psi_{s},
\end{equation}
where
\begin{equation}\label{mf}
{\cal{L}}_{\hat{X}}C^{i}_{jk}=\dot{\nabla }_{k} (\nabla_{j} X^{i}+C^{i}_{jh}\nabla_{0} X^{h})+X^{h}P^{i}_{\ jhk}+\nabla_{0} X^{h}Q^{i}_{\ jhk}.
\end{equation}
\end{lemA}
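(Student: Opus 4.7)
The plan is to establish both displayed identities by pushing the Lie derivative past Cartan covariant derivatives, and then collecting the obstruction terms generated by the projective character of $X$.

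\textbf{Step 1: the formula \eqref{mf} for $\pounds_{\hat X}C^i_{jk}$.} I start from $C^i_{jk}=\tfrac{1}{2}g^{il}\dot\partial_l g_{jk}$ and apply $\pounds_{\hat X}$, using the well-known commutation rules for the complete lift ($\pounds_{\hat X}y^i=0$, $\pounds_{\hat X}dx^i=0$, and $[\pounds_{\hat X},\dot\partial_k]=0$, see e.g.\ \cite{Yano}). This expresses $\pounds_{\hat X}C^i_{jk}$ purely in terms of $\pounds_{\hat X}g_{jk}$ and its vertical derivative. Next, I convert the partial vertical derivatives into Cartan covariant derivatives $\dot\nabla_k$ acting on $\nabla_j X^i+C^i_{jh}\nabla_0 X^h$; this substitution is forced by the standard identity $\pounds_{\hat X}g_{ij}=\nabla_i X_j+\nabla_j X_i+2L_{ij0}(X)+\dots$ when expressed via Cartan connection. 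The passage from partial to covariant derivatives generates exactly the hv- and vv-curvature corrections encoded in $P^i_{\ jhk}$ and $Q^i_{\ jhk}$ through their defining formulas \eqref{Def,P}--\eqref{Def,Q}. This yields \eqref{mf}.

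\textbf{Step 2: the main identity \eqref{md}.} Since $\nabla_0=y^r\nabla_r$ and $\pounds_{\hat X}y^r=0$, the difference
\[
\pounds_{\hat X}\nabla_0 C^i_{jk}-\nabla_0\pounds_{\hat X}C^i_{jk}=y^r\bigl(\pounds_{\hat X}\nabla_r-\nabla_r\pounds_{\hat X}\bigr)C^i_{jk}
\]
reduces to the commutator $[\pounds_{\hat X},\nabla_r]$ applied to the $(1,2)$-tensor $C^i_{jk}$. On any such tensor this commutator is linear in the Lie derivatives of the Cartan Christoffel symbols, namely
\[
[\pounds_{\hat X},\nabla_r]C^i_{jk}=C^s_{jk}(\pounds_{\hat X}\Gamma^i_{sr})-C^i_{sk}(\pounds_{\hat X}\Gamma^s_{jr})-C^i_{js}(\pounds_{\hat X}\Gamma^s_{kr}).
\]

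\textbf{Step 3: using the projective hypothesis.} Twice differentiating $\pounds_{\hat X}G^i=\Psi y^i$ of Lemma \ref{Def;proj.vect} with respect to $y^j$ and $y^k$, and taking into account the $y$-derivatives of a $1$-homogeneous $\Psi$, produces the normal form $\pounds_{\hat X}G^i_{jk}=\Psi_{jk}y^i+\Psi_j\delta^i_k+\Psi_k\delta^i_j$. A parallel computation, combined with the relation between Berwald and Cartan connections (which differ by a tensor built from $\dot A^i_{\ jk}$), transforms this into a corresponding formula for $\pounds_{\hat X}\Gamma^i_{jk}$. After contracting $y^r$ into this expression and substituting into the commutator of Step 2, the Euler identity $y^iC_{ijk}=0$ and the $0$-homogeneity of $C^i_{jk}$ in $y$ kill most terms. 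Only the trace-type contributions survive: the $\Psi\delta$-terms produce $\Psi\,C^i_{jk}$, while the $\Psi_{rs}y^i$-terms contract to $y^iC^s_{jk}\Psi_s$ (using $1$-homogeneity of $\Psi$ once more). Rearranging gives exactly \eqref{md}.

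\textbf{Expected obstacle.} The delicate point is Step 3: keeping track of which terms in $\pounds_{\hat X}\Gamma^i_{jk}$ actually survive after contraction with the totally symmetric and $y$-orthogonal tensor $C^i_{jk}$. Several apparently distinct contributions collapse to zero thanks to the identities $y^iC_{ijk}=0$ and $y^i\dot\partial_i C^m_{jk}=0$, and one must use the $1$-homogeneity of $\Psi$ (so that $y^s\Psi_s=\Psi$) to see that the sole remaining terms are $\Psi C^i_{jk}$ and $y^i C^s_{jk}\Psi_s$. Verifying this collapse carefully, rather than the mechanical manipulations of Steps 1 and 2, is the main technical hurdle.
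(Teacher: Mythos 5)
First, note that the paper itself offers no proof of this lemma: it is quoted verbatim from Akbar-Zadeh \cite{AH}, so your proposal can only be judged on its own merits. Your overall strategy (prove \eqref{mf} as a general Lie-derivative formula, then obtain \eqref{md} from the commutator of $\pounds_{\hat X}$ with $\nabla_0=y^r\nabla_r$ and the projective condition) is the right one, but Steps 2--3 contain two concrete errors that, as written, make the computation come out with the wrong sign. In Step 2 your commutator formula
\[
[\pounds_{\hat X},\nabla_r]C^i_{jk}=C^s_{jk}(\pounds_{\hat X}\Gamma^i_{sr})-C^i_{sk}(\pounds_{\hat X}\Gamma^s_{jr})-C^i_{js}(\pounds_{\hat X}\Gamma^s_{kr})
\]
is the one valid for an affine connection on the base; in the Finsler setting $\nabla_r$ involves $\delta_r=\partial_r-G^s_r\dot\partial_s$, and since $[\pounds_{\hat X},\delta_r]=-(\pounds_{\hat X}G^s_r)\dot\partial_s$ there is an additional term $-(\pounds_{\hat X}G^s_r)\,\dot\partial_sC^i_{jk}$ which you have dropped. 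Second, in your ``expected obstacle'' paragraph you invoke $y^s\dot\partial_sC^m_{jk}=0$; this is false, because $C^i_{jk}=\tfrac12 g^{il}\dot\partial_lg_{jk}$ is positively homogeneous of degree $-1$ in $y$, so Euler's theorem gives $y^s\dot\partial_sC^i_{jk}=-C^i_{jk}$.

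These two mistakes are not harmless: they are exactly where the term $\Psi C^i_{jk}$ in \eqref{md} comes from. Contracting with $y^r$ and using $\Gamma^i_{sr}y^r=G^i_s$ together with $\pounds_{\hat X}G^i_s=\Psi_sy^i+\Psi\delta^i_s$ and $C^i_{sk}y^s=0$, the three $\Gamma$-terms alone give $y^iC^s_{jk}\Psi_s+\Psi C^i_{jk}-\Psi C^i_{jk}-\Psi C^i_{jk}=y^iC^s_{jk}\Psi_s-\Psi C^i_{jk}$, i.e.\ the wrong sign. The missing piece is the dropped term: $-y^r(\pounds_{\hat X}G^s_r)\dot\partial_sC^i_{jk}=-2\Psi y^s\dot\partial_sC^i_{jk}=+2\Psi C^i_{jk}$, using $y^rG^s_r=2G^s$, $\pounds_{\hat X}G^s=\Psi y^s$, and the $(-1)$-homogeneity of $C^i_{jk}$. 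Adding it restores $\Psi C^i_{jk}+y^iC^s_{jk}\Psi_s$ as required. You should also note that the full expression for $\pounds_{\hat X}\Gamma^i_{jk}$ (via the Berwald--Cartan comparison you sketch) is unnecessary here: only the contraction $y^r\pounds_{\hat X}\Gamma^i_{sr}=\pounds_{\hat X}(\Gamma^i_{sr}y^r)=\pounds_{\hat X}G^i_s$ is needed, which follows immediately from $\pounds_{\hat X}y^r=0$.
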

\section{Some formulas on projective vector fields }
Here, we explicitly derive some formulas on the Lie derivatives of some significant geometric objects along projective vector fields.
\begin{lem}\label{Lem;proj}
Let $(M,F)$ be a Finsler manifold. For a
 projective vector field $X$ on $(M,F)$ we have
\begin{align}
1.\ &{\cal{L}}_{\hat{X}}{G}^i_k ={\Psi _k}{y^i} + \Psi \delta _k^i \qquad \textrm{where} \quad \Psi_k:=\Psi_{.k}:=\frac{{\partial \Psi }}{{\partial {y^k}}}.\label{lem;a}\\
2.\ &\pounds_{\hat{X}}G^i_{ jk}=\delta^i_{\ j}\Psi_k+\delta^i_{k}\Psi_j+y^i\Psi_{k.j}.\label{bm}\\
3.\ &\pounds_{\hat{X}}G^i_{jkl}=\delta^i_{j}\Psi_{k.l}+\delta^i_{k}\Psi_{j.l}
+\delta^i_{l}\Psi_{k.j}+y^i\Psi_{k.j.l}. \label{proj 400} \\
4.\ &\pounds_{\hat{X}}E_{jl}=\frac{1}{2}(n+1)\Psi_{j.l}. \label{proj 6}\\
5.\ &\pounds_{\hat{X}}I_k=f_{.k}, \  \textrm{where} \ f=\nabla_{i} X^{i}+I_{i}\nabla_{0} X^{i}. \label{proj 7}\\
6.\ &\pounds_{\hat{X}}J_k=\nabla_{0} f_{.k}+\Psi I_k. \label{proj 8}\\
7.\ &(n+1)\Psi_k=\nabla_{k} f+\nabla_{0} f_{.k}, \textrm{where} \; \Psi=(\frac{1}{n+1})\nabla_{0} f.\label{lem;g}\\
8.\ & \pounds_{\hat{X}}K^i_{jkl}=\delta^i_{j}(D_{k}\Psi_{l} -D_{l}\Psi_{k})
+\delta^i_{l}D_{k}\Psi_{j} \nn \\ & \qquad \qquad -\delta^i_{k}D_{l}\Psi_{j} +y^i(D_{k}\Psi_{l} -D_{l}\Psi_{k})_{.j}\label{proj 1000}\\
9.&\pounds_{\hat{X}}K_{jl}=D_{j}\Psi_{l} -nD_{l}\Psi_{j} +y^i{D_{i}} \Psi_{l.j} .\label{proj 11}
\end{align}
\end{lem}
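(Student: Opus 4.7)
The plan is to derive identities (1)--(9) in order, since each builds on the previous. The engine is the defining projective condition $\pounds_{\hat{X}}G^{i}=\Psi y^{i}$ from Lemma A \ref{Def;proj.vect}, together with three structural facts: $\pounds_{\hat{X}}$ commutes with the vertical partial derivative $\dot{\partial}_{k}$; the function $\Psi$ is positively $1$-homogeneous in $y$, so Euler's theorem applies to $\Psi$, to $\Psi_{k}$, and to their iterated vertical derivatives; and Cartan's connection is metric-compatible, i.e.\ $\nabla_{k}g_{ij}=0$. The auxiliary inputs are Lemmas A \ref{prop2}, \ref{la} and \ref{beh16}.

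For (1)--(3), I would apply $\dot{\partial}_{k}$ successively to $\pounds_{\hat{X}}G^{i}=\Psi y^{i}$, using $G^{i}_{k}=\dot{\partial}_{k}G^{i}$, $G^{i}_{jk}=\dot{\partial}_{j}\dot{\partial}_{k}G^{i}$, $G^{i}_{jkl}=\dot{\partial}_{l}\dot{\partial}_{j}\dot{\partial}_{k}G^{i}$, and the Leibniz rule applied to $\Psi y^{i}$. For (4), I would contract (3) by setting $i=k$ and invoke $G^{i}_{jil}=2E_{jl}$; this yields $(n+2)\Psi_{j.l}+y^{i}\Psi_{i.j.l}$, and Euler gives $y^{i}\Psi_{j.i}=0$, whose $y^{l}$-derivative is $y^{i}\Psi_{j.i.l}=-\Psi_{j.l}$, so the sum collapses to $(n+1)\Psi_{j.l}$.

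For (5), I would use $I_{k}=\dot{\partial}_{k}\tau$: the volume-ratio piece of $\tau$ depends only on $x$ and vanishes under $\dot{\partial}_{k}$, hence $\pounds_{\hat{X}}I_{k}=\dot{\partial}_{k}\pounds_{\hat{X}}\ln\sqrt{\det g}=\dot{\partial}_{k}\bigl(\tfrac{1}{2}g^{ij}\pounds_{\hat{X}}g_{ij}\bigr)$. The Cartan identity $\pounds_{\hat{X}}g_{ij}=\nabla_{i}X_{j}+\nabla_{j}X_{i}+2C_{ijr}\nabla_{0}X^{r}$, contracted with $g^{ij}$, gives exactly $2f$. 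For (6), note $J_{k}=\nabla_{0}I_{k}=\nabla_{0}C^{i}_{ik}$ by metric compatibility, then contract Lemma A \ref{la} with $j=i$; the angular property $y^{i}C^{s}_{ik}=0$ kills the spurious term, leaving $\pounds_{\hat{X}}J_{k}=\nabla_{0}f_{.k}+\Psi I_{k}$. For the scalar half of (7), contract Lemma A \ref{prop2} with $g^{ij}$: metric compatibility turns the left side into $2\nabla_{0}f$, while the right side becomes $2n\Psi+2\Psi_{i}y^{i}=2(n+1)\Psi$ by Euler, giving $\Psi=\nabla_{0}f/(n+1)$. The covector half of (7) then follows by applying $\dot{\partial}_{k}$ to this identity and reorganising the result as $\nabla_{k}f+\nabla_{0}f_{.k}$ using the Finsler commutator between $\dot{\partial}_{k}$ and $\nabla_{0}$ acting on scalars.

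For (8), I would apply $\pounds_{\hat{X}}$ termwise to the Berwald curvature formula \eqref{mv}, substituting (1), (2) and (3) for the Lie derivatives of $G^{s}_{k}$, $G^{i}_{jl}$ and $G^{i}_{jls}$, and handling the $\delta_{k}$-derivatives via the commutator $[\pounds_{\hat{X}},\delta_{k}]$ obtained along the lines of Lemma \ref{beh16}. Large cancellations reduce the expression to the skew-symmetric combination in (8). Contracting $i=l$ in (8), with one more application of Euler to $\Psi$, gives (9). The main obstacle is the bookkeeping in (8): one must carefully match the $\delta^{i}_{j}$, $\delta^{i}_{k}$, $\delta^{i}_{l}$ and $y^{i}$ pieces generated by (2)--(3) against the linear and quadratic terms of \eqref{mv} and verify that every non-skew contribution cancels, using repeatedly the $1$-homogeneity of $\Psi$ and the identity $y^{s}G^{i}_{jls}=0$ (valid since $G^{i}_{jl}$ is $0$-homogeneous). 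A lesser subtlety is the covector half of (7), where the non-trivial commutator $[\dot{\partial}_{k},\nabla_{0}]$ on scalars must be handled with care to produce precisely the $\nabla_{k}f$ correction.
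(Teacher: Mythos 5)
Your proposal is correct and, for items 1--4 and 6--9, follows essentially the same route as the paper: successive vertical differentiation of $\pounds_{\hat{X}}G^{i}=\Psi y^{i}$, contraction plus Euler's theorem for the $\textbf{E}$-term (your explicit evaluation $y^{i}\Psi_{i.j.l}=-\Psi_{j.l}$ is the correct way to land on $(n+1)\Psi_{j.l}$), contraction of Lemma~\ref{la} for item 6, and termwise Lie differentiation of \eqref{mv} for items 8 and 9. The one genuine divergence is item 5: the paper contracts $i$ and $k$ in \eqref{mf} and observes that the traces $P^{i}_{\ ihk}$ and $Q^{i}_{\ ihk}$ vanish identically, whereas you go through the distortion, writing $I_{k}=\dot{\partial}_{k}\tau$ and computing $\tfrac{1}{2}g^{ij}\pounds_{\hat{X}}g_{ij}=f$. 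Both are valid; the paper's version reuses Lemma~\ref{la}, already quoted, while yours requires the identity $\pounds_{\hat{X}}g_{ij}=\nabla_{i}X_{j}+\nabla_{j}X_{i}+2C_{ijr}\nabla_{0}X^{r}$, which is standard (Akbar-Zadeh) but nowhere stated in the paper, so you should cite or prove it. Your derivation of the scalar half of item 7 by tracing Lemma~\ref{prop2} with $g^{ij}$ is a clean way to fill in what the paper dismisses as ``a simple manipulation''. Two small cautions on item 9: the contraction defining $K_{jl}:=K^{i}_{\ jil}$ is $i$ with $k$, not $i$ with $l$; setting $i=l$ yields $-K_{jk}$ by the antisymmetry of \eqref{mv} in its last two indices, so a sign must be tracked (the final formula agrees after relabelling). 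Also, in the surviving term $y^{i}(D_{i}\Psi_{l}-D_{l}\Psi_{i})_{.j}$ one needs $\dot{\partial}_{j}D_{i}\Psi_{l}=D_{i}\Psi_{l.j}-\Psi_{r}G^{r}_{lij}$ in full generality (the purely symmetric form $-\Psi_{r}G^{r}_{lij}$ used later in the paper is valid only when $\Psi_{l.j}=0$); it is precisely the $D_{i}\Psi_{l.j}$ piece that survives contraction with $y^{i}$ and produces the last term of item 9.
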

\begin{proof}
Let $(M,F)$ be a non-Riemannian Finsler manifold and $X$ a projective vector field  on $(M,F)$.
After  Lemma \ref{Def;proj.vect}, the characteristic function $\Psi(x,y)$ on $TM_0$, is positively  $1$-homogeneous on $y$.
By a vertical derivative of \eqref{proj 3} we have the first assertion \eqref{lem;a}.
Again, a vertical derivative of \eqref{lem;a} leads to  the second assertion \eqref{bm}.
Another   vertical derivative of \eqref{bm} yields the third assertion \eqref{proj 400}.
By  definition of the {\textbf{E}-curvature} we have ${E_{ij}} := \frac{1}{2}G_{irj}^r$, hence ${\pounds}_{\hat{X}}{E_{ij}} = \frac{1}{2}{\pounds}_{\hat{X}}G_{irj}^r$.
 Contracting the indices $i$ and $k$ in \eqref{proj 400} and using Euler's Theorem, we get
\begin{align*}
2{\pounds}_{\hat{X}}{E_{jl}} = {\pounds}_{\hat{X}}G_{jil}^i = {\Psi _{i.j.l}}{y^i} + {\Psi _{i.j}}\delta _l^i + {\Psi _{i.l}}\delta _j^i + {\Psi _{j.l}}\delta _i^i,
\end{align*}
or
$2{\pounds}_{\hat{X}}{E_{jl}} = {\Psi _{l.j}} + (n + 1){\Psi _{j.l}}$ which proves the $4^{th}$ statement of Lemma.
 Contracting $i$ and $k$ in \eqref{mf}, yields
\begin{align}\label{bk}
{\pounds}_{\hat{X}}{I_k} =(\nabla_{i} X^{i}+I_{h}\nabla_{0} X^{h})_{.k} + {X^h}P_{ihk}^i + \nabla_{0} X^{h}Q_{ihk}^i.
\end{align}
Using \eqref{Def,P} and \eqref{Def,Q} the second and the third curvature tensors of Cartan connection respectively, we see that the second and the third terms in the right hand side of \eqref{bk} vanish and therefore
${\pounds}_{\hat{X}}{I_k} = (\nabla_{i} X^{i}+I_{h}\nabla_{0}X^{h})_{.k}.$
If we set  $f :=  (\nabla_{i} X^{i}+I_{h}\nabla_{0} X^{h})$   then ${\pounds}_{\hat{X}}{I_k} = {f_{.k}},$ which is the equation \eqref{proj 7} in the statements of Lemma \ref{Lem;proj}.
Contracting $i$ and $k$ in \eqref{md} gives
\begin{equation}\label{cf}
{\pounds}_{\hat{X}} \nabla_{0} I_{k}=  \nabla_{0} ({\pounds}_{\hat{X}} {I_k})+ \Psi {I_k}.
\end{equation}
Using the definition ${J_k} := \nabla_{0} I_{k}$,  \eqref{proj 7} and \eqref{cf} we obtain the statement $\eqref{proj 8}$  of Lemma \ref{Lem;proj}.
A simple manipulation leads   $$\Psi=(\frac{1}{n+1})\nabla_{0}(\nabla_{i} X^{i}+I_{i}\nabla_{0}X^{i})=
\frac{1}{n+1}\nabla_{0} f.$$
  A vertical partial derivative of the last equation yields the $\eqref{lem;g}$ statement of Lemma \ref{Lem;proj}.
 Lie derivative of the curvature tensor  \eqref{mv}, and using \eqref{bm} and \eqref{proj 400} yields
\begin{align*}
& \pounds_{\hat{X}}K^i_{jkl}=\delta^i_{j}(D_{k}\Psi_{l} -D_{l}\Psi_{k})
+\delta^i_{l}D_{k}\Psi_{j}-\delta^i_{k}D_{l}\Psi_{j} +y^i(D_{k}\Psi_{l} -D_{l}\Psi_{k})_{.j}.
\end{align*}
Hence we have the  statement  $\eqref{proj 1000}$.
 Contracting $i$ and $k$ in the last equation yields the  statement $\eqref{proj 11}$ of Lemma \ref{Lem;proj}.
This completes the proof of Lemma
\end{proof}
Recall that in Riemannian geometry, in the definition of projective vector fields, the function $\Psi=\Psi(x,y)$   is linear for $y$, while in the
Finslerian setting the mentioned linearity depends on the quantity which is a non-Riemannian feature of the space, as we saw in the $7^{th}$ statement of Lemma\ref{Lem;proj}.

 We consider in the sequel the following definitions for a vector field $X$.
 \begin{defn}\label{Def;I,E,H}
Let $(M,F)$ be a Finsler manifold and  $X$ a projective vector field on $(M,F)$. Using the notation of the last lemma we say that $X$ is;
\begin{itemize}
  \item {\bf I}-{\it invariant projective} vector field  or simply \emph{{\bf I}-projective} vector field if $\pounds_{\hat{X}}{\bf I}=0$, equivalently, $f_{.k}=0$.
  \item {\bf E}-{\it invariant projective vector field} if $\pounds_{\hat{X}}{\bf E}=0$, or
      equivalently, $\Psi_{k.j}=0$.
   \item C-{\it projective vector field} if  $\nabla_k\Psi_{j}=\nabla_j\Psi_{k}$.
   \item {\bf H}-{\it invariant projective vector field}  if
       $\nabla_l\Psi_{jk}=\nabla_k\Psi_{jl}$.
\end{itemize}
\end{defn}
In the case of $\textbf{E}$-invariant projective vector fields, the function $\Psi(x,y)$ is linear with respect to $y$ at any point $x$.
\subsection{{\bf I}-projective vector fields}
As mentioned earlier, to have consistent quantities that measure the local anisotropy, we have to assume that it is a projective invariant quantity.
 The study of the projective symmetry of ${\bf I}$, consists of considering the projective vector fields $\hat X$ which leave ${\bf I}$ invariant.

 The ${\bf I}$-projective vector fields are widely studied  in \cite{Ra1,Ra2} under the name {\it the special projective vector fields}.

    The following proposition emphasizes the essential role of symmetries.
\begin{prop}\label{Prop;I-inv}
If a Finsler manifold $(M,F)$ admits an {\bf I}-projective vector field $X$, then  $\pounds_{\hat{X}}{\bf E}=0$, $\pounds_{\hat{X}}{\bf
H}=0$,   $\pounds_{\hat{X}}{\bf B}=0$.
\end{prop}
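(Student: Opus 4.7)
The plan is to exploit Lemma~\ref{Lem;proj} systematically, starting from the key observation that the hypothesis $\pounds_{\hat X}\mathbf I=0$ (equivalently $f_{.k}=0$, by statement~5 of the lemma) forces the projective factor $\Psi$ to be linear in $y$. Indeed, since $f=f(x)$ depends only on position, statement~7 gives
\[
\Psi \;=\; \frac{1}{n+1}\nabla_{0} f \;=\; \frac{1}{n+1}\, y^i\partial_i f(x),
\]
so $\Psi_k=\frac{1}{n+1}\partial_k f(x)$ is a function of $x$ alone, and hence $\Psi_{k.l}=0$ and $\Psi_{k.j.l}=0$.

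With this in hand, the invariance of $\mathbf E$ and $\mathbf B$ falls out at once. Statement~4 yields $\pounds_{\hat X}E_{jl}=\tfrac{n+1}{2}\Psi_{j.l}=0$, so $\pounds_{\hat X}\mathbf E=0$. Similarly, statement~3 expresses $\pounds_{\hat X}G^i_{jkl}$ as a linear combination of second and third vertical derivatives of $\Psi$, all of which now vanish; since $B^i_{jkl}=G^i_{jkl}$, this yields $\pounds_{\hat X}\mathbf B=0$.

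The case $\pounds_{\hat X}\mathbf H=0$ is the substantive step. The plan is to use the contracted Bianchi identity \eqref{Eq;Kij}, namely $y^l K_{jl.m}=-2H_{jm}$. Since $\pounds_{\hat X}y^l=0$ and $\pounds_{\hat X}$ commutes with $\partial/\partial y^m$,
\[
\pounds_{\hat X}H_{jm}\;=\;-\tfrac{1}{2}\,y^l\,\bigl(\pounds_{\hat X}K_{jl}\bigr)_{.m}.
\]
Substituting statement~9 and using $\Psi_{l.j}=0$ leaves $\pounds_{\hat X}K_{jl}=D_j\Psi_l-nD_l\Psi_j$. Because $\Psi_l$ depends only on $x$, a direct unpacking of the Berwald horizontal derivative gives $(D_j\Psi_l)_{.m}=-G^r_{ljm}\Psi_r$, and likewise $(D_l\Psi_j)_{.m}=-G^r_{jlm}\Psi_r$. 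By symmetry of the third vertical derivatives of $G^r$ in their lower indices, the resulting bracket reduces to a multiple of $y^l G^r_{ljm}\Psi_r$, and Euler's theorem applied to the $0$-homogeneous tensor $G^r_{jm}$ gives $y^l G^r_{ljm}=0$. Hence $\pounds_{\hat X}H_{jm}=0$.

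The main obstacle is precisely this last chain of manipulations: one must carefully track the non-commutativity of the Berwald horizontal derivative $D_j$ with $\partial/\partial y^m$, exploit the fact that $\Psi_l$ is $y$-independent so the $(\delta_j\Psi_l)_{.m}$ piece drops out, and recognize that the residual term is annihilated by the $0$-homogeneity of $G^r_{jm}$. Once this calculation is in place, the rest of the proposition is a mechanical application of Lemma~\ref{Lem;proj}.
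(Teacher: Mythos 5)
Your proof is correct, and for two of the three claims it coincides with the paper's argument: from $f_{.k}=0$ you deduce that $\Psi=\frac{1}{n+1}\nabla_0 f$ is linear in $y$, so $\Psi_{j.l}=\Psi_{k.j.l}=0$, and statements 4 and 3 of Lemma~\ref{Lem;proj} give $\pounds_{\hat X}\mathbf{E}=0$ and $\pounds_{\hat X}\mathbf{B}=0$ at once — exactly as in the paper. Where you genuinely diverge is the $\mathbf{H}$-invariance. The paper disposes of it by asserting, without derivation, a formula of the type $\pounds_{\hat X}H_{ij}=\frac{n+1}{2}\Psi_{ij}$ and noting that the right-hand side vanishes for $\Psi$ linear in $y$; you instead derive the vanishing from ingredients already proved in the paper, namely the contracted Bianchi identity \eqref{Eq;Kij}, $y^lK_{jl.m}=-2H_{jm}$, combined with statement 9 of Lemma~\ref{Lem;proj}. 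Your chain checks out: the commutation of $\pounds_{\hat X}$ with $\dot\partial_m$ and the identity $\pounds_{\hat X}y^l=0$ are recorded in the paper's preliminaries, so $\pounds_{\hat X}H_{jm}=-\tfrac12 y^l(\pounds_{\hat X}K_{jl})_{.m}$ is legitimate; with $\Psi_{l.j}=0$ the term $y^iD_i\Psi_{l.j}$ drops, $(D_j\Psi_l)_{.m}=-\Psi_rG^r_{ljm}$ because $\delta_j\Psi_l$ is $y$-independent, the total symmetry of $G^r_{jlm}$ collapses the bracket to $(n-1)\Psi_rG^r_{jlm}$, and Euler's theorem for the $0$-homogeneous $G^r_{jm}$ gives $y^lG^r_{jlm}=0$. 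The trade-off is clear: the paper's version is shorter but leans on an unproved (externally sourced) Lie-derivative formula for $\mathbf{H}$, whereas yours is self-contained within the lemmas and identities the paper actually establishes, at the cost of the extra computation you flag as the main obstacle.
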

%
%
\begin{proof}
Let $X$ be an $I$-invariant projective vector field, that is, $\pounds_{\hat{X}}{\bf I}=0$. Based on \eqref{proj 7}, we get ${f_{.k}}=0$, hence $f$  depends on $x$ alone.
By  \eqref{proj 8} we get $(n + 1){\Psi_k} = {{\nabla_kf }}$ and $\Psi  = {\Psi_k}{y^k} = {f_k}{y^k}$, where $\Psi$ is the differential of $f$.
On the basis of \eqref{proj 6}, we have $\pounds_{\hat{X}}{E_{ij}} = \frac{1}{2}(n+1){\Psi_{j.l}} = \frac{1}{2}(n + 1){f_{j.l}}=0$,
 $f$ is independent of $y$ and $\Psi$ is a linear function on $y$.
Now, let $J=0$, from \eqref{proj 8} and ${f_{.k}}$ we have $\pounds_{\hat{X}}{J_k} = \Psi {I_k} = 0$. If $\Psi  \ne 0$ then $I=0$ and if $I \ne 0$ then $\Psi  = 0$. We know that $\pounds_{\hat{X}}{\bf H_{ij}} = \frac{{(n + 1)}}{2}{\Psi_{ij}}$ since $\Psi$ is linear on $y$, the second derivative vanishes, and $\Psi_{ij}=0$, hence $\pounds_{\hat{X}}{\bf H_{ij}}=0$. Using \eqref{proj 400}, linearity of  $\Psi$ on $y$ yields $\pounds_{\hat{X}}\textbf{B }= 0$.

\end{proof}
As an application of the above proposition, we have the following Lemma.
\begin{lem}\label{beh15}
Let $(M,F)$ ba a Finsler manifold.
 Lie derivatives  of the two Ricci tensors  $Ric_{ij}$  and $\tilde R_{ij}$  coincide along the {\bf I}-projective vector fields, that is
$${{\cal{L}}_{\hat X}}Ri{c_{ij}} = {{\cal{L}}_{\hat X}}{\tilde R_{ij}}.$$
\end{lem}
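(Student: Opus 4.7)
The plan is to use the algebraic relation between the two Ricci tensors derived in Lemma \ref{Le;AZRiccis}, namely
$$Ric_{ij} = \tilde R_{ij} - \mathbf{H}_{ij},$$
and combine it with the fact, already recorded in Proposition \ref{Prop;I-inv}, that the non-Riemannian quantity $\mathbf{H}$ is invariant along any $\mathbf{I}$-projective vector field. Since the Lie derivative is $\mathbb{R}$-linear and commutes with tensor contractions and indexed identities, applying $\pounds_{\hat X}$ componentwise to the relation above gives
$$\pounds_{\hat X}\, Ric_{ij} \;=\; \pounds_{\hat X}\, \tilde R_{ij} \;-\; \pounds_{\hat X}\,\mathbf{H}_{ij}.$$

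Then, invoking the hypothesis $\pounds_{\hat X}\mathbf{I}=0$, which by the fifth statement of Lemma \ref{Lem;proj} is equivalent to $f_{.k}=0$, Proposition \ref{Prop;I-inv} yields $\pounds_{\hat X}\mathbf{H}_{ij}=0$. Substituting this into the displayed equation gives the claimed identity. The argument is essentially a one-line consequence of the two previously established results; there is no real obstacle beyond verifying that one may legitimately differentiate the tensorial identity of Lemma \ref{Le;AZRiccis} by $\pounds_{\hat X}$, which follows from the standard compatibility of $\pounds_{\hat X}$ with the $(0,2)$-tensor structure on $\pi^*TM\otimes\pi^*TM$ over $SM$.
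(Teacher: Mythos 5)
Your proposal is correct and follows exactly the paper's own argument: both apply $\pounds_{\hat X}$ to the relation $Ric_{ij}=\tilde R_{ij}-\mathbf{H}_{ij}$ of Lemma \ref{Le;AZRiccis} and then invoke Proposition \ref{Prop;I-inv} to conclude $\pounds_{\hat X}\mathbf{H}_{ij}=0$ for an $\mathbf{I}$-projective vector field. No gaps; your extra remark on the linearity of the Lie derivative is a harmless elaboration the paper leaves implicit.
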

\begin{proof}
Recall that the relation between the two definitions of Ricci curvature by Akbar-Zadeh, namely $Ric_{ij}$  and $\tilde R_{ij}$ defined by \eqref{Eq;Ric1} and \eqref{Eq;Ric2},  is given by
$
Ri{c_{ij}} = {\tilde R_{ij}} - {\bf H_{ij}},
$
cf. \eqref{AZRiccitensors}.
Let $X$ be an {\bf I}-projective vector field on $(M,F)$. Proposition\ref{Prop;I-inv} tells us that ${{\cal{L}}_{\hat X}}{H{ij}} = 0$. From which \eqref{AZRiccitensors} yields
$
{{\cal{L}}_{\hat X}}{Ric_{ij}} = {{\cal{L}}_{\hat X}}{\tilde R_{ij}}.
$
This completes the proof.
\end{proof}
 Lemma \ref{Lem;proj} provides proof for Theorem \ref{Th;first}.\\
  \emph{\textbf{Proof of Theorem \ref{Th;first}.}}
Let $X$ be an \textbf{I}-invariant projective vector field, hence $\Psi=y^k\nabla_k f(x)$, $\nabla_l\Psi_{k}=\nabla_k\Psi_{l}$ and $\Psi_{i.k}=0$.
By definition ${{\tilde R}_{ij}} = \frac{1}{2}({K_{ij}} + {K_{ji}})$ hence,
${{\pounds}_{\hat X}}({{\tilde R}_{ij}}) = \frac{1}{2}({{\pounds}_{\hat X}}{K_{ij}} + {{\pounds}_{\hat X}}{K_{ji}}).$
Since $\Psi_{i.j}=0$  by means of \eqref{proj 11} in the last lemma, we have
\begin{align}\label{beh}
{ {\pounds}_{\hat X}}({{\tilde R}_{ij}}) = \frac{1}{2}({D_i}{\Psi _j} - n{D_j}{\Psi _i} + {D_j}{\Psi _i} - n{D_i}{\Psi _j})  = (1 - n)({D_i}{\Psi _j}).
\end{align}
On the other hand, $(M,F)$ is an Einstein space and by definition ${ Ric_{ij}} = (n-1)kg_{ij}$, where by  assumption $k$ is a constant.
Hence the Lie derivative along $\hat X$ reads
\begin{align}\label{beh1}
{ {\pounds}_{\hat X}}({ Ric_{ij}}) = k(n-1){{\pounds}_{\hat X}}{g_{ij}}.
\end{align}
 By Lemma \eqref{beh15}
the equations \eqref{beh} and \eqref{beh1} we have
\begin{align}\label{beh2}
(1 - n)({D_i}{\Psi _j}) = k(n-1){t_{ij}},
\end{align}
where  ${t_{ij}}: = {\pounds}_{\hat{X}}{g_{ij}}$.
The contracted Berwald derivative  $D_{0}$,  is equal to the Cartan contracted derivative $ \nabla_{0}$,  and  \eqref{beh2} leads
$ (1 - n){D_0}{D_i}{\Psi _j}=k(n-1)({D_0}{t_{ij}}) $.
 Hence by \eqref{proj char 1} we get
\begin{align}\label{beh3}
(1 - n){D_0}{D_i}{\Psi _j}=k(n-1)(2\Psi g_{ij}+\Psi_iy_j+\Psi_jy_i).
\end{align}
The partial derivative  $ \frac{\pa}{\pa y^m}$  of \eqref{beh3} yields
\begin{align*}
 (1-n){D_m}{D_i}{\Psi _j}+&(1 - n){y^l}{{\dot \pa}_m}({D_l}{D_i}{\Psi _j})\\&=k(n-1)(2{\Psi _m}{g_{ij}}+4{C_{ijm}}\Psi +{\Psi _i}{g_{jm}}+{\Psi _j}{g_{im}}).
\end{align*}
As well  from the last  equation we have
\begin{align}\label{beh5}
(1 - n){D_m}{D_i}{\Psi _j} =& k(n-1)(2{\Psi _m}{g_{ij}} + 4{C_{ijm}}\Psi + {\Psi _i}{g_{jm}} + {\Psi _j}{g_{im}})  \\&- (1 - n){y^l}{{\dot \partial }_m}({D_l}{D_i}{\Psi _j}).\nn
\end{align}
We can easily see that the last term in the above equation, notably ${{\dot \partial }_i}({D_l}{D_m}{\Psi _j})$  is symmetric in the three indices, $i,m$ and $j$.
In fact, letting ${M_{ij}}:= {D_i}{\Psi _j} $, and using the horizontal Berwald derivative we have
\begin{align*}
{M_{ij.m}} = {{\dot \partial }_m}{D_i}{\Psi _j}& = {{\dot \partial }_m}(({\partial _i} - G_i^r{{\dot \partial }_r}){\Psi _j} - {\Psi _r}G_{ij}^r) \\&
= {{\dot \partial }_m}({\partial _i}{\Psi _j} - G_i^r{\Psi _{j.r}} - {\Psi _r}G_{ij}^r) =  - {\Psi_r}G_{ijm}^r.
\end{align*}
%
%
Therefore ${M_{ij.m}}$ is symmetric in $i,j,$ and $m$. Using twice \eqref{beh5}, we have
\begin{align}\label{beh7}
{D_i}{D_m}{\Psi _j} - {D_m}{D_i}{\Psi _j} = {k}({\Psi _m}{g_{ij}} - {\Psi _i}{g_{jm}}).
\end{align}


 Recalling that $\Psi$ is 1-homogenouse of degree $1$, by Euler theorem we have ${y^i}{\Psi _i} = \Psi $.
 Contracting \eqref{beh7} with $y^i$ and $y^j$ yields
\begin{align}\label{beh8}
{D_0}{D_m}\Psi  - {D_m}{D_0}\Psi  ={k}({\Psi _m}{F^2} - \Psi {y_m}).
\end{align}
Using Lemma\eqref{beh16} we get
\begin{align}
{D_0}{D_m}\Psi  - {D_m}{D_0}\Psi &= {y^k}({D_k}{D_m}\Psi  - {D_m}{D_k}\Psi ) = {y^k}(K_{0mk}^r{{\dot \partial }_r}\Psi )\nn \\ &
 =K_{0m0}^r{\Psi _r} = R_m^r{\Psi _r}.
\end{align}
Convecting the above equation with the gradient vector field ${\Psi ^m=g^{im}\Psi _i}$  and using \eqref{beh8}   yields
\begin{align*}
R_r^m{\Psi _r}{\Psi ^m} = {k}\{ {\Psi _m}{\Psi ^m}{F^2} - {\Psi _i}{y^i}{\Psi ^m}{y_m}\}.
\end{align*}
The above equation is written globally in the free index form
\begin{align}\label{3002}
  {{g_y}({\bf R}_y(\nabla {\Psi  }}),\nabla {\Psi  })={k}\{{{F^2}(y){g_y}(\nabla {\Psi }},\nabla {\Psi }) - {g_y}{{(\nabla {\Psi}},y)}^2\},
\end{align}
where $g_y(.,.)$, is the inner product induced by $F$, ${\bf R}_y$ is the Riemann curvature operator \eqref{Rik} and $\nabla {\Psi}|_{(x,y)}=g^{im}\frac{\pa \Psi}{\pa y^i} \frac{\pa}{\pa x^m}|_{(x,y)}\in \Gamma ({\pi ^*}TM)$ is the gradient vector field.

On the other hand,  there is a tangent plane $P=span\{y, \nabla \Psi\}\in T_x M$, such that the flag curvature \eqref{Def;flag} is given by
\begin{align}\label{3003}
{\bf K}(P,y) &= \frac{{{g_y}({\bf R}_y(\nabla {\Psi  }}),\nabla {\Psi  })}{{{g_y(y,y)}{g_y}(\nabla {\Psi }},\nabla {\Psi }) - {g_y}{{(\nabla {\Psi}},y)}^2}.
\end{align}
It should be remarked  given any point $(x,y)\in TM_0$, we have  $\nabla {\Psi}|_{(x,y)}\neq 0$. In fact, if $\nabla {\Psi}=g^{il}\frac{\pa \Psi}{\pa y^i} \frac{\pa}{\pa x^m}=0$, then $g^{im}\frac{\pa \Psi}{\pa y^i}=0 $ and due to the positive definiteness of $g^{im}$, we have $\frac{\pa \Psi}{\pa y^i}=0$.
By $1$-homogeneity of $\Psi$,   Euler's theorem asserts ${y^i}\frac{{\partial \Psi }}{{\partial {y^i}}} = \Psi$.
Hence $\Psi  = 0$, and $X$ becomes an affine vector field.  Due to the assumption of Theorem, $X$ is non-affine and therefore  $\nabla \Psi  \ne 0$.
Replacing $F^2(y)=g_y(y,y)$ in  \eqref{3002} and comparing with \eqref{3003},  yields ${\bf K}(p,y) = {k}$.
This completes the proof of Theorem \ref{Th;first}.\hspace{\stretch{1}}$\Box$\\

 \emph{\textbf{Proof of Theorem \ref{Th;second}.}}
  Let $(M,F)$ be a non-Riemannian Landsberg space and $X$  a non affine $I$-invariant projective vector field on $(M,F)$. For a Landsberg space we have
${\pounds}_{\hat{X}}{\bf{J}}=0$ and hence $\pounds_{\hat{X}}J_k=0$, where  ${J_k} = {\nabla_0I_{k}}$.
Equation \eqref{proj 8} yields
$\nabla_0f_{.k}+\Psi I_k=0$.
The equation \eqref{proj 7} and Definition \ref{Def;I,E,H} for \textbf{I}-projective vector fields follows $f_{.k}=0$. Thus $\Psi I_k=0$. If ${I_k} \ne 0$, it results that $\Psi=0$ and $X$ is an affine vector field, which is in contradiction with the hypothesis. Otherwise ${I_k}= 0$,  and by Diecke's Theorem \ref{Th;Deicke}, it turns out $F$ is Riemannian.
This completes the proof of Theorem \ref{Th;second}. \hspace{\stretch{1}}$\Box$\\

After a result in \cite{M1}, every compact Finsler space of constant ${\bf S}$-curvature is automatically of vanishing ${\bf S}$-curvature.
 It follows that every compact Einstein Randers space is of vanishing ${\bf S}$-curvature. This leads to the following corollary.
\begin{cor}
\label{COR}
On a compact Einstein Randers space of dimension $n>2$, every projective vector field is
 {\bf E}-invariant.
\end{cor}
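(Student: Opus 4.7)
The plan is to chain together the two remarks that immediately precede the corollary into a short implication. First, I would invoke the cited theorem of~\cite{M1} together with the known characterization of compact Einstein Randers metrics (which is where the dimension hypothesis $n>2$ enters): a compact Einstein Randers manifold has constant $\mathbf{S}$-curvature, and any compact Finsler space of constant $\mathbf{S}$-curvature must in fact have $\mathbf{S}\equiv 0$. Hence, on our space, $\mathbf{S}$ vanishes identically on $TM_0$.

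Next, I would appeal to the very definition
\[
E_{ij}(x,y)=\tfrac12\mathbf{S}_{y^iy^j}(x,y).
\]
Two vertical derivatives of the identically zero function $\mathbf{S}$ give $E_{ij}\equiv 0$, i.e., $\mathbf{E}=0$ as a tensor on $TM_0$.

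With $\mathbf{E}\equiv 0$ established, the corollary becomes immediate: for any projective vector field $X$, the Lie derivative of the zero tensor is zero, so $\pounds_{\hat X}\mathbf{E}=0$, which by Definition~\ref{Def;I,E,H} is exactly the statement that $X$ is $\mathbf{E}$-invariant. If one prefers to read this through the projective factor, Lemma~\ref{Lem;proj}, equation~\eqref{proj 6}, gives
\[
\tfrac12(n+1)\Psi_{k.j}=\pounds_{\hat X}E_{jk}=0,
\]
so $\Psi$ is linear in $y$, which is again the $\mathbf{E}$-invariance condition of Definition~\ref{Def;I,E,H}.

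The only nontrivial input is the fact that compact Einstein Randers spaces (in dimension $n>2$) have constant $\mathbf{S}$-curvature; the rest of the argument is a one-line consequence of the formula $E_{ij}=\tfrac12\mathbf{S}_{y^iy^j}$ and the definition of $\mathbf{E}$-invariance. So there is no real obstacle beyond locating and citing the correct classification result for Randers Einstein metrics; the dimensional restriction $n>2$ is imported verbatim from that classification.
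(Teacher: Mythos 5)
Your proposal is correct and follows exactly the route the paper intends: the remarks preceding the corollary supply that a compact Einstein Randers space ($n>2$) has constant, hence vanishing, $\mathbf{S}$-curvature, so $E_{ij}=\tfrac12\mathbf{S}_{y^iy^j}\equiv 0$ and $\pounds_{\hat X}\mathbf{E}=0$ trivially for every projective $X$. Your added observation that equation \eqref{proj 6} then forces $\Psi_{k.j}=0$ correctly closes the loop with Definition \ref{Def;I,E,H}.
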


Now we are in a position to extend the following theorem of Akbar-Zadeh for isotropic  mean Landsberg spaces.
\begin{theo}\cite{AH}
 Let $(M,F)$ be a compact Landsberg manifold and $X\in
P_{0}(M,r)$. If $R(X,X)$ is negative definite, then projective transformation corresponding to $X$ is reduced to identity. If $R(X,X)$ is non-positive definite, then
$X$ is a  Killing and horizontal covariant derivative of $X$ is zero.
\end{theo}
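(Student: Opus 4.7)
The plan is to adopt a Bochner--type integration argument on the unit sphere bundle $SM$, closely paralleling the technique Akbar--Zadeh has used for several of the integral identities quoted in this excerpt. The Landsberg hypothesis $L_{ijk}=\nabla_0 C_{ijk}=0$ gives two crucial simplifications. First, Lemma~\ref{la} collapses to $\pounds_{\hat X}\nabla_0 C^i_{jk} = \Psi C^i_{jk} + y^i C^s_{jk}\Psi_s$, which via Lemma~\ref{Lem;proj}(6) yields $\pounds_{\hat X}J_k = \Psi I_k$. Second, horizontal derivatives along $y$ commute better with the Cartan tensor, so the error terms arising from the Ricci identity~\eqref{Eq;RicciIdentity} are considerably more tractable.

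First, I would form a suitable horizontal $1$-form built from $t_{ij} := \pounds_{\hat X} g_{ij}$ and $X$, such as $\pi := g^{ij} t_{ik} X^k \, dx^i$, and compute its co-differential using \eqref{bz}. The projective characterization of Lemma~\ref{prop2} gives $\nabla_0 t_{ij} = 2\Psi g_{ij} + \Psi_i y_j + \Psi_j y_i$, and contracting appropriately with $y$ and $g^{ij}$, then applying the Ricci identity~\eqref{Eq;RicciIdentity} to $X^i$, should produce a pointwise identity of the schematic form
\begin{equation*}
R_{ij} X^i X^j \;+\; Q(\nabla X,\nabla X) \;=\; \text{divergence term},
\end{equation*}
where $Q$ is a pointwise non--negative quadratic form in the Cartan horizontal derivatives of the complete lift $\hat X$.

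Second, I would integrate this identity over $SM$ against the Akbar--Zadeh volume form $\eta$ of \eqref{Def;Vol} and discard the divergence using Green's theorem (Corollary~\ref{Cor;Green}), arriving at
\begin{equation*}
\int_{SM} R_{ij}\,X^i X^j\, \eta \;+\; \int_{SM} Q(\nabla X,\nabla X)\, \eta \;=\; 0.
\end{equation*}
If $R(X,X)$ is negative definite, then both non--positive integrands must vanish identically on $SM$; the strict negativity of $R_{ij} X^i X^j$ when $X\neq 0$ forces $X\equiv 0$, so the associated one--parameter group is trivial and the projective transformation reduces to the identity. If $R(X,X)$ is merely non--positive, then only $Q(\nabla X,\nabla X)=0$ need follow, yielding $\nabla X = 0$; the projective equation \eqref{proj char 1} then degenerates to $2\Psi g_{ij} + \Psi_i y_j + \Psi_j y_i = 0$, whose trace in $i,j$ (combined with Lemma~\ref{Lem;proj}(7)) forces $\Psi=0$, and hence $\pounds_{\hat X} g_{ij}=0$, so $X$ is Killing with vanishing horizontal Cartan derivative.

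The main obstacle will be the careful bookkeeping of the non--Riemannian remainder terms generated when commuting two Cartan horizontal derivatives of $X$ via \eqref{Eq;RicciIdentity}: specifically the mixed $\dot\nabla_r X^i\cdot R^r_{0kl}$ contribution and the contorsion term involving $S^r_{kl}=C^r_{kl}-C^r_{lk}$. One must show, using the Landsberg condition together with the refined identity $\pounds_{\hat X} J_k = \Psi I_k$ and the mean--Cartan structure, that these remainders either cancel or assemble into further divergences that disappear upon applying Corollary~\ref{Cor;Green}. This delicate cancellation, rather than the integration step itself, is where the genuine work of the proof lies.
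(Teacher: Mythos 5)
Your overall strategy---an integral (Bochner-type) identity on $SM$ killed by Green's theorem, Corollary \ref{Cor;Green}---is indeed the route taken here: the paper does not reprove the quoted Akbar--Zadeh theorem but establishes it as the special case $\lambda=0$ of the proof of Theorem \ref{Th;third}. However, your proposal has a genuine gap at the one place where a Bochner argument cannot be left schematic: the sign. You write the integrated identity as $\int_{SM} R_{ij}X^iX^j\,\eta+\int_{SM}Q(\nabla X,\nabla X)\,\eta=0$ with $Q$ pointwise non-negative, and then assert that ``both non-positive integrands must vanish''; this is internally inconsistent (one integrand is $\ge 0$, the other $\le 0$, and their sum vanishing forces nothing). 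The identity actually obtained is
\begin{equation*}
\left\langle a(X),a(X)\right\rangle+\frac{n-1}{n+1}\int_{SM}\delta\bigl((X+\rho V)\bigr)^{2}\eta
=2\int_{SM}R_{ij}X^iX^j\,\eta ,
\end{equation*}
with $a(X)=\tfrac12(\nabla_iX_j-\nabla_jX_i)dx^i\wedge dx^j$ and $\rho=X^iI_i$: the curvature term sits on the \emph{opposite} side with the \emph{same} sign as the non-negative terms, so non-positivity of $R(X,X)$ forces everything to vanish, and strict negativity upgrades $\int R_{ij}X^iX^j\eta=0$ to $X\equiv 0$. With your sign the argument simply does not close.

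The second, related omission is the first-order term. Computing $\delta(i_{(X)}a(X))$ via \eqref{bz} produces $g(a(X),a(X))-\bigl[2R_{ij}X^iX^j+(n-1)X^i\Psi_i\bigr]$, and the cross term $X^i\Psi_i$ is sign-indefinite; the bulk of the paper's proof consists of showing $(n+1)X^i\Psi_i=-f^2+\textrm{Div}$, where $f=\nabla_iX^i+I_i\nabla_0X^i$, by introducing the auxiliary vertical $1$-forms $Y,Z,W,U$ and verifying that $\rho\Psi$, $h\rho$, etc.\ are divergences on $SM$. This is precisely what supplies the second non-negative summand $\frac{n-1}{n+1}\delta(X+\rho V)^2$ above, and your proposal does not address it; the ``delicate cancellation'' you defer is not in the Ricci-identity remainders (which contribute nothing problematic here) but in this conversion. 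Two smaller points: the natural $1$-form is $i_{(X)}a(X)$ built from the antisymmetrized derivative of $X$, not $g^{ij}t_{ik}X^k\,dx^i$ (the latter does not yield the norm-square $g(a(X),a(X))$); and in the non-positive case the vanishing one obtains is $a(X)=0$ together with $\delta(X+\rho V)=0$, from which one deduces $f=0$, $\Psi=0$, hence $\pounds_{\hat X}g_{ij}=0$ and $\nabla X=0$ --- not $\nabla X=0$ directly from a full norm of $\nabla X$.
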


\emph{\textbf{Proof of Theorem \ref{Th;third}.}}
 Let $X$ be a projective vector field on the Finsler manifold $(M,F)$.
Lemma \ref{prop2} asserts that
\begin{equation}\label{1}
\nabla_0t_{ij}=2\Psi g_{ij}+y_i\Psi_{j}+y_{j}\Psi_{i},
 \end{equation}
where, ${t_{ij}} = {\pounds}_{\hat{X}}{g_{ij}}$.
Taking the vertical derivative of \eqref{1} and using
${\pounds}_{\hat{X}}E_{jk}=0$,  shows that
\begin{equation}\label{2}
\nabla_k(\nabla_jX^{i}+C^{i}_{jh}\nabla_0X^{h})+X^{h}R^{i}_{jhk}=\delta^{i}_{j}\Psi_{k}+\delta^{i}_{k}
 \Psi_{j}+C^{i}_{jk}\Psi,
 \end{equation}
see for instance \cite[p.59]{A2}. Consider a $2$-form $a(X)$  on $SM$.
\begin{equation}\label{3}
a(X):=(1/2)(\nabla_iX_{j}-\nabla_jX_{i})dx^{i}\wedge dx^{j}.
\end{equation}
Let $i_{(X)}$ be the interior product operator of $X$ and use \eqref{2} and \eqref{3} to get
\begin{equation}\label{4}
\delta(i_{(X)}a(X))=g(a(X),a(X))-[2R_{ij}X^{i}X^{j}+(n-1)X^{i}\Psi_{i}],
\end{equation}
where  $R_{ij}$ is the  trace of Cartan hh-curvature, cf. \eqref{Rik} and $\delta$ designates the co-differential operator given by \eqref{bz}.
 Recall that for a projective vector field $X$ by Lemma \eqref{la} we have
 \begin{equation}\label{5}
{\cal{L}}_{\hat{X}}\nabla_0C^{i}_{jk}=
\nabla_0({\cal{L}}_{\hat{X}}C^{i}_{jk})+\Psi C^{i}_{jk}+y^{i}C^{s}_{jk}\Psi_{s},
\end{equation}
where
where
\begin{equation}\label{6}
{\cal{L}}_{\hat{X}}C^{i}_{jk}=\dot{\nabla }_{k} (\nabla_{j} X^{i}+C^{i}_{jh}\nabla_{0} X^{h})+X^{h}P^{i}_{\ jhk}+\nabla_{0} X^{h}Q^{i}_{\ jhk}.
\end{equation}
Contracting the indices $i$ and $j$ in \eqref{6} leads
\begin{equation}\label{7}
{\pounds}_{\hat{X}}I_{k}=(\nabla_{i} X^{i}+I_{i}\nabla_{0}X^{i})_{.k})=f_{.k},
\end{equation}
 Contracting the indices $i$ and $j$ in \eqref{md}, yields
\begin{equation}\label{9}
{\pounds}_{\hat{X}}\nabla_{0} I_{k}={\pounds}_{\hat{X}} J_k =\nabla_{0} f_{.k} + \Psi I_{k}.
\end{equation}
Taking into account  $f=\nabla_{i} X^{i}+I_{i}\nabla_{0}X^{i}$, in \eqref{proj 7} by a vertical derivation of $\Psi=\frac{1}{n+1}\nabla_{0}f=\frac{1}{n+1}\nabla_{0}(\nabla_{i} X^{i}+I_{i}\nabla_{0}X^{i})$
 we get
\begin{equation}\label{ma}
(n+1)\Psi_{i}=\nabla_{i} f+\nabla_{0} f_{.i}.
\end{equation}
Let $(M,F)$ be an isotropic mean Landsberg manifold. By definition there is a function $\lambda(x,y)$ on TM such that  $\nabla_{0} I_{k}= {J_k} = \lambda (x,y){I_k}$. The Lie derivative
\begin{equation}\label{10}
{\pounds}_{\hat{X}}{J_k} ={\pounds}_{\hat{X}}\lambda(x,y)I_{k}=({\pounds}_{\hat{X}}\lambda(x,y))I_{k}
+\lambda(x,y){\pounds}_{\hat{X}}I_k.
\end{equation}
By hypothesis,  ${\pounds}_{\hat{X}}I_{k}=0$, and \eqref{10} becomes
\begin{equation}\label{11}
{\pounds}_{\hat{X}}{J_k} =({\pounds}_{\hat{X}}\lambda(x,y))I_{k}=\hat{X}.(\lambda(x,y))I_{k},
\end{equation}
 and  by means of \eqref{9} we get
\begin{equation}\label{12}
\nabla_{0} f_{.k} =\hat{X}.\lambda(x,y)I_{k} -\Psi I_{k}.
\end{equation}
Setting $h(x,y):=\hat{X}.\lambda(x,y)$, and using \eqref{ma} and \eqref{12}, leads
\begin{equation}\label{13}
(n+1)\Psi_{i}=\nabla_{i} f+hI_{i}-\Psi I_i.
\end{equation}
Convecting  $X^i$ to the both sides of \eqref{13},  results
\be\label{EQ;X^i}
(n+1)X^{i}\Psi_{i}=X^{i}\nabla_{i} f  +hI_{i}X^{i}-\Psi X^{i} I_{i}=
\nabla_{i} \left( X^{i}f\right)  -f\nabla_{i} X^{i}+h\rho-\Psi\rho,
\ee
where we place  
 $\rho:={X^i}{I_i}:=g(X,I^{*})$.
  Adding and subtracting the term $I_{i}\nabla_{0}X^{i}$ yields
\begin{align}\label{14}
(n+1)X^{i}\Psi_{i}=\nabla_{i} \left( X^{i}f\right)  -f(\nabla_{i} X^{i}+I_{i}\nabla_{0}X^{i})+f\nabla_{0} \rho +h\rho-\rho\Psi.
\end{align}
Recall that $f\nabla_{0} \rho  = \delta(f\rho V) - (n + 1)\rho \Psi$, where $\Psi=(\frac{1}{n+1})\nabla_{0} (\nabla_{i} X^{i}+I_{i}\nabla_{0}X^{i})$.
Therefore \eqref{14}  reads
\begin{align}\label{15}
(n+1)X^{i}\Psi_{i}=-\delta((X+\rho V)f)-f^{2}+h\rho-fg\rho-(n+2)\Psi\rho.
\end{align}
In order to compute ${X^i}{\psi _i}$ in \eqref{4}, we show that $\Psi\rho$ in the above equation is a divergence term on $SM$. We set
\begin{align*}
{Y_i} = g(X,l){\Psi _i} - {F^{ - 1}}g(X,l)\Psi {l_i},
\end{align*}
where $Y_i$ are the components of a vertical 1-form on $SM$.
%
 We are going to show in details that the divergence operator $\dot\delta$ of the vertical 1-form $Y_i$   on $SM$ yields
\begin{align}\label{mb}
 - \dot \delta{Y} = {X^i}{\Psi _i} - n{F^{ - 1}}g(X,l)\Psi.
\end{align}
If $Y = {Y_k}\frac{{\delta {y^k}}}{F}$ is a vertical 1-form then $\dot \delta Y$ the vertical divergence of $Y$  defined by \eqref{div.operators+2} is $\dot \delta Y = - F{g^{ij}}\frac{{\partial {Y_j}}}{{\partial {y^i}}}$.
Let us assume ${Y_i} = g(X,l){\Psi _i} - \frac{{g(X,l)}}{F}\Psi {l_i}$, where ${l^i} = \frac{{{y^i}}}{F}$,
$g(X,l) = {g_{km}}{X^k}\frac{{{y^m}}}{F} = {g_{km}}{X^k}{l^m}$. We have,
\begin{align*}
{{\dot \partial }_j}(g(X,l)) =& {{\dot \partial }_j}({g_{km}}{X^k}{l^m})= 2{C_{kmj}}{X^k}{l^m} + {g_{km}}{{\dot \partial }_j}{X^k}{l^m} + {g_{km}}{X^k}{{\dot \partial }_j}{l^m}\\ = & {g_{kl}}{X^k}(\frac{{\delta _j^m - {l^m}{l_j}}}{F}) = \frac{{{X_j} - g(X,l){l_j}}}{F}.
\end{align*}
Hence,
\begin{align*}
{l_j} = {g_{ij}}\frac{{{y^i}}}{F} = {(\frac{1}{2}{F^2})_{{.i}{.j}}}\frac{{{y^i}}}{F} = \frac{{{{(\frac{1}{2}{F^2})}_{{.j}}}}}{F} = \frac{{F{F_{{y^j}}}}}{F} = {F_{{.j}}},
\end{align*}
and we get ${l_j} = {{\dot \partial }_j}F$. As well we have
\begin{align*}
{g_{ij}} = {(\frac{1}{2}{F^2})_{{.i}{.j}}} = {F_{{.i}}}{F_{{.j}}} + F{F_{{.i}{.j}}} = {l_i}{l_j} + F{{\dot \partial }_j}{l_i}.
\end{align*}
Therefore, ${{\dot \partial }_j}{l_i} = \frac{{{g_{ij}} - {l_i}{l_j}}}{F}$. Next we compute the term $\dot \delta Y$:
\begin{align*}
\dot \delta Y =& - F{g^{ij}}{{\dot \partial }_j}(g(X,l){\Psi _i}) + F{g^{ij}}{{\dot \partial }_j}(\frac{{g(X,l)}}{F}\Psi {l_i})\\& = - F{g^{ij}}{{\dot \partial }_j}(g(X,l)){\Psi _i} - F{g^{ij}}g(X,l){\Psi _{ij}} + F{g^{ij}}{{\dot \partial }_j}(g(X,l))\frac{1}{F}\Psi {l^i}  \\ & \ \quad + F{g^{ij}}\frac{{g(X,l)}}{F}{{\dot \partial }_j}\Psi {l_i} + F{g^{ij}}\frac{{g(X,l)}}{F}{{\dot \partial }_j}{l_i}\Psi + F{g^{ij}}{{\dot \partial }_j}(\frac{1}{F})g(X,l)\Psi {l_i}.
\end{align*}

Replacing the terms ${{\dot \partial }_j}(g(X,l)) = \frac{{{X_j} - g(X,l){l_j}}}{F}$, ${{\dot \partial }_j}F = {l_j}$ and
${{\dot \partial }_j}{l_i} = \frac{{{g_{ij}} - {l_i}{l_j}}}{F}$ in the above equation yields

\begin{align*}
\dot \delta Y = &- F{g_{ij}}(\frac{{{X_j} - g(X,l){l_j}}}{F}){\Psi _i} - F{g^{ij}}g(X,l){\Psi _{ij}} + F{g^{ij}}(\frac{{{X_j} - g(X,l){l_j}}}{F})\frac{1}{F}\Psi {l_i} \\&+ F{g^{ij}}\frac{{g(X,l)}}{F}{\Psi _j}{l_i} + F{g^{ij}}\frac{{g(X,l)}}{F}\Psi {{\dot \partial }_j}{l_i} + F{g^{ij}}\frac{{( - {{\dot \partial }_j}F)}}{{{F^2}}}g(X,l)\Psi {l^i}\\ =& - {X^i}{\Psi _i} + g(X,l){\Psi _i}{l^i} - Fg(X,l){g^{ij}}{\Psi _{ij}} + {F^{ - 1}}{X^i}\Psi {l_i} + \\&{F^{ - 1}}g(X,l)\Psi ({g^{ij}}{g_{ij}} - {l^j}{l_j}) - {F^{ - 1}}g(X,l)\Psi\\ =& - {X^i}{\Psi _i} + {F^{ - 1}}g(X,l)\Psi + {F^{ - 1}}g(X,l)\Psi - 2{F^{ - 1}}g(X,l)\Psi + n{F^{ - 1}}g(X,l)\Psi\\=&
 - {X^i}{\Psi _i} + n{F^{ - 1}}g(X,l)\Psi.
\end{align*}

Therefore we have \eqref{mb}.
Similarly, if we set
${Z_i} = {F^{ - 1}}\left[ {\Psi .{X_i} - g(X,l)\Psi {l_i}} \right],$
then applying the divergence operator  yields
\begin{align}\label{mc}
- \dot \delta{Z}= n{F^{ - 1}}g(X,l)\Psi  - {X^i}{\Psi _i} - 2g(X,{I^*})\Psi,
\end{align}
where we used the notation $g(X,I^{*}):={X^i}{I_i}=\rho$.
Adding \eqref{mb} and \eqref{mc} gives
\begin{align*}
\dot \delta {(Y - Z)} = 2g(X,{I^*})\Psi=2\rho\Psi.
\end{align*}
Next we show that $h\rho$ and $fg\rho$ are divergence terms on $SM$.
Let
\begin{align*}
W_{i}:=g(X,u)\dot \delta_{i}h-F^{-1}g(X,l)h.l_{i},
\end{align*}
where $Y_{i}$ are components of a vertical 1-form. We have
\begin{equation}\label{16}
-\dot \delta{W}=X^{i}\dot \delta_{i}h-nF^{-1}g(X,l)h.
\end{equation}
If we set
\begin{align*}
U_{i}=F^{-1}h X_{i}-F^{-1}g(X,l) h l_{i},
\end{align*}
then we get
\begin{equation}\label{17}
-\dot \delta{U}=X^{i}\dot \delta_{i}h+2g(X,I^{*})h-nF^{-1}g(X,l)h.
\end{equation}
By \eqref{16} and \eqref{17}, we conclude
\begin{equation}\label{WU}
\dot \delta(W-U)=2g(X,I^{*})h=2\rho h,
\end{equation}
is a divergence term.
In the same way, one can show that $fg\rho$ is a divergence.
Using (\ref{WU}), rewrite \eqref{15}  in the following form
\begin{eqnarray}\label{XP}
\nonumber(n+1)X^{i}\Psi_{i}\!\!\!\!&=\!\!\!\!&-\delta((X+\rho
V)f)-\frac{1}{2}\dot \delta(W-U)-\frac{(n+2)}{2}\ \dot \delta (Y-Z)-f^{2}\\ \!\!\!\!&=\!\!\!\!& Div-f^{2}.
\end{eqnarray}
Replacing \eqref{XP} in \eqref{4}, yields
\begin{equation}\label{del}
\delta(i_{(X)}a(X))=g(a(X),a(X))+\frac{n-1}{n+1}\delta((X+\rho
V)f)^{2}-2R_{ij}X^{i}X^{j}+Div,
\end{equation}
where  $R_{ij}$ is the  trace of the Cartan hh-curvature.
Integrating \eqref{del} on $SM$, and omitting the divergence terms by means of Corollary \ref{Cor;Green} of Green's Theorem leads
\begin{equation*}
<a(X),a(X))>+\frac{n-1}{n+1}\underset{SM}\int\delta(X+\rho V)^{2}\eta=
2\underset{SM}\int R_{ij}X^{i}X^{j}\eta.
\end{equation*}
If $R(X,X)=R_{ij}X^{i}X^{j}<0$ is negative, since the left-hand side of the above equation is positive, the equality holds if $X=0$,  and the infinitesimal projective transformation  corresponding to $X$, is reduced to identity.\\
If $R(X,X)\leq 0$, then
$\delta(X+\rho V)=0\  \textrm{and}\  a(X)=0.$
Since $\delta(X+\rho V)=0$, $X$ is a Killing vector field  and
 is of horizontal covariant derivative null.
  This completes the proof of Theorem \ref{Th;third}. \hspace{\stretch{1}}$\Box$\\
 With the similar proof of  Theorem \ref{Th;third}  and using Definition\ref{Def;average Ricci} we have the following corollary.
\begin{cor}
Let $(M,F)$ be a compact isotropic mean Landsberg manifold
 and $X$ a projective vector field leaving invariant the
 dual trace of torsion.
 \\1. If the average Ricci curvature $\tilde{R} (X,X)$ is negative definite, then the
projective transformation corresponding to $X$ reduces to identity.
\\2. If the average Ricci curvature $\tilde{R} (X,X)$ is non-positive definite, then $X$ is Killing and its
horizontal  derivative vanishes.
\end{cor}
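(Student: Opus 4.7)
The plan is to reuse the integrated identity that emerged at the very end of the proof of Theorem \ref{Th;third}. Recall that after integration over $SM$ and omission of the divergence terms via Corollary \ref{Cor;Green}, we arrived at
\[
\langle a(X),a(X)\rangle + \frac{n-1}{n+1}\int_{SM}\bigl(\delta(X+\rho V)\bigr)^{2}\eta \;=\; 2\int_{SM} R_{ij}X^{i}X^{j}\,\eta.
\]
This identity was derived under precisely the hypotheses at issue: $(M,F)$ compact isotropic mean Landsberg and $X$ a projective vector field leaving $I$ invariant (the dual trace of torsion). So nothing of the derivation needs to be redone; the whole computation of Theorem \ref{Th;third} is available verbatim.

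The first step is to recognize the right-hand side in terms of the average Ricci curvature. By Definition \ref{Def;average Ricci},
\[
\int_{SM} R_{ij}X^{i}X^{j}\,\eta \;=\; \mathrm{Vol}(SM)\,\tilde R(X,X),
\]
so the identity rewrites as
\[
\langle a(X),a(X)\rangle + \frac{n-1}{n+1}\int_{SM}\bigl(\delta(X+\rho V)\bigr)^{2}\eta \;=\; 2\,\mathrm{Vol}(SM)\,\tilde R(X,X).
\]
The left-hand side is a sum of manifestly non-negative quantities (a squared global norm plus an integral of a square), while $\mathrm{Vol}(SM)>0$. Thus the sign of the right-hand side is governed entirely by $\tilde R(X,X)$.

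For part $(1)$, the assumption that $\tilde R(X,X)$ is negative definite means $\tilde R(X,X)<0$ whenever $X\neq 0$; comparing the two sides forces a contradiction unless $X\equiv 0$, in which case the associated projective transformation reduces to the identity. For part $(2)$, the assumption $\tilde R(X,X)\leq 0$ forces both non-negative pieces on the left to vanish, giving $a(X)=0$ and $\delta(X+\rho V)=0$; as in the proof of Theorem \ref{Th;third}, these two conditions are exactly what it means for $X$ to be Killing and to have vanishing horizontal covariant derivative.

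The only subtlety, and thus the main potential obstacle, is verifying that passing from the pointwise hypothesis $R(X,X)\leq 0$ in Theorem \ref{Th;third} to the averaged hypothesis $\tilde R(X,X)\leq 0$ does not weaken any intermediate conclusion. Since the pointwise hypothesis was used in the proof of Theorem \ref{Th;third} only at the very last step—exactly to control the sign of $\int_{SM}R_{ij}X^{i}X^{j}\eta$—and since the averaged hypothesis controls precisely this same integral quantity, the final implications $X=0$ in case $(1)$ and $a(X)=0$, $\delta(X+\rho V)=0$ in case $(2)$ follow without any further adjustment.
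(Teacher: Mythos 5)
Your proposal is correct and follows essentially the same route as the paper, which simply invokes the proof of Theorem \ref{Th;third} together with Definition \ref{Def;average Ricci}: the integrated identity from that proof only uses the sign of $\int_{SM}R_{ij}X^{i}X^{j}\eta$, which is exactly what the averaged hypothesis $\tilde R(X,X)\le 0$ (or $<0$) controls. Your explicit observation that the pointwise hypothesis enters only at this final integral step is the precise justification the paper leaves implicit.
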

\textbf{ Acknowledgement.}
The second author would like to express his gratitude to the ITM (Institute Mathematics of Toulouse) where this article is partially written.

Behnaz Lajmiri$^1$, Behroz Bidabad$^{1,3}$, Mehdi Rafie-Rad$^2$, Yadollah Keshavarzi$^1$
\\ $^1$ Department of Mathematics and Computer Science Amirkabir University of Technology (Tehran Polytechnic) 424 Hafez Ave. 15914 Tehran, Iran.\\
$^2$ Department of Mathematics,
Mazandaran University, Babolsar, Iran.\\
$^3$ Institut de Mathematique de Toulouse,
Universit\'{e} Paul Sabatier, 118 route de Narbonne - F-31062 Toulouse, France.\\
    \textbf{ E-mail address:} behnaz.lajmiri@aut.ac.ir; \  bidabad@aut.ac.ir; \\
    behroz.bidabad@math.univ-toulouse.fr; \ rafie-rad@umz.ac.ir; \  y.keshavarz@aut.ac.ir.

\end{document}